\documentclass[11pt]{article}

\usepackage{amsmath,amssymb,amsthm,mathtools}
\usepackage[margin=1in]{geometry}
\usepackage{enumitem}
\usepackage{booktabs}
\usepackage[colorlinks=true,linkcolor=blue,citecolor=blue,urlcolor=blue]{hyperref}

\newtheorem{theorem}{Theorem}[section]
\newtheorem{proposition}[theorem]{Proposition}
\newtheorem{corollary}[theorem]{Corollary}
\newtheorem{lemma}[theorem]{Lemma}
\newtheorem{definition}[theorem]{Definition}
\newtheorem{remark}[theorem]{Remark}

\newcommand{\R}{\mathbb R}
\newcommand{\C}{\mathbb C}
\newcommand{\HH}{\mathbb H}
\newcommand{\D}{\mathbb D}
\newcommand{\Harm}{\mathcal H}
\newcommand{\Herm}{\operatorname{Herm}}
\newcommand{\Sym}{\operatorname{Sym}}
\newcommand{\End}{\operatorname{End}}
\newcommand{\tr}{\operatorname{tr}}
\newcommand{\rank}{\operatorname{rank}}
\newcommand{\Rea}{\operatorname{Re}}

\newcommand{\Pfour}{\mathcal P_4}
\newcommand{\Gfour}{\mathcal G_4}

\title{Quadratic-Defect Completions of Spherical $2$-Design Orbits}
\author{Kuan-Cheng Chien and Ming-Hsuan Kang}
\date{}

\begin{document}
\maketitle

\begin{abstract}
We study how spherical $2$-designs arising from finite group orbits can be
completed to spherical $4$-designs by adjoining further orbits, allowing
weights in the general theory.  For an
irreducible real orthogonal $G$-module $W$ with
$\D=\End_G(W)\in\{\R,\C,\HH\}$, we consider the multiplicity-two
representation $W\oplus W$ and retain the failure of the $2$-design equation
$M^*M=\frac12I_2$ as a quadratic defect.  When the invariant quartics are
determined by the Hermitian Gram matrix, the fourth-moment problem reduces
to a mean and covariance condition on these defects.  This yields a sharp
lower bound for the total weight of the correction orbits; at equality,
their normalized defects form a weighted spherical $2$-design in the
associated defect space.  The quartic condition holds for the multiqubit
Clifford groups in every dimension $r\ge1$, giving an unbounded-dimensional
family with a fixed three-dimensional defect space; among equality cases
using the minimum number of correction orbits, the defect
geometry is always a regular tetrahedron.  As a complementary unweighted
example, we construct a $378$-point $W(E_6)$-invariant spherical
$4$-design in $S^{11}$ and prove that it is sharp among unweighted invariant
completions containing a spherical $2$-design orbit.
\end{abstract}

\section{Introduction}

A spherical $t$-design is a finite configuration on the sphere whose averages
agree with the uniform spherical measure for every polynomial of degree at
most $t$ \cite{DelsarteGoethalsSeidel1977}.  Finite group orbits are a
natural source of such designs: $G$-invariance reduces the defining moment
conditions to a small collection of invariant polynomials, a reduction we
used in \cite{ChienKang2025} to classify the spherical $2$-design orbits of
finite groups.  The present paper asks whether such an orbit can serve as
the building block of a design of higher strength---specifically, whether
it can be completed to a spherical $4$-design by adjoining further orbits,
allowing orbit weights in the general theory.

Our approach is to retain, rather than solve, the equations behind that
$2$-design criterion.  Each point of the relevant multiplicity-two
representation has an associated Hermitian Gram matrix, and the $2$-design
condition is exactly that this matrix be scalar; we track the failure of
this equation as a \emph{quadratic defect}, so that $2$-design orbits are
precisely the orbits of zero defect.  Working through degree four, and using
antipodalization to remove any odd contribution, we show that whenever a
single explicit character condition holds---that every invariant quartic is
already determined by the Gram matrix---the fourth-moment equations for a
union of orbits collapse to a vanishing mean and an isotropic covariance
condition on the defects, in a Euclidean space of dimension at most $5$.  A
$2$-design orbit, having zero defect, then serves as a base configuration,
and the missing fourth-moment information must come from \emph{correction
orbits} of nonzero defect.

This reduction yields a sharp lower bound on the total weight the correction
orbits must carry, with equality exactly when their normalized defects form
a weighted spherical $2$-design on an associated projective line.  The
underlying character condition holds well beyond isolated examples: it holds
for the dihedral groups $I_2(m)$ with $m=3$ or $m\ge5$, and for the multiqubit Clifford groups
\cite{Webb2016,Zhu2017} in every dimension $r\ge1$.  The latter give an
unbounded-dimensional family with a fixed three-dimensional defect space;
when equality in the correction-weight bound is attained with the minimum
possible number of correction orbits, those defects form the same regular
tetrahedron in every dimension.

As a complementary unweighted example, we construct a $378$-point spherical
$4$-design invariant under the Weyl group $W(E_6)$, completing a
$270$-point $2$-design orbit with four $27$-point correction orbits.  We
prove that $378$ is best possible among unweighted $W(E_6)$-invariant
$4$-designs containing a constituent orbit that is itself a spherical
$2$-design; at equality the orbit-size pattern is necessarily
$270+4\cdot27$, the correction orbits have rank-one Gram matrices, and their
normalized defects form a square.

This viewpoint is complementary to constructions of higher-order designs as
unions of lower-order ones, such as that of Mohammadpour and Waldron
\cite{MohammadpourWaldron2019}.  Here the correction constituents need not
be spherical designs in the ambient sphere: they are selected by their
nonzero quadratic defects.  The defect formulation additionally yields an
extremal correction-weight bound and a geometric description of its equality
case.  A recent Gramian characterization of spherical designs due to Waldron
\cite{WaldronGramian2025} instead uses the Gram matrix of the entire point
configuration and associated potentials.  The Gram matrix here has a different
role: it is the $2\times2$ multiplicity-space Gram matrix attached to a single
orbit representative, and its traceless part records the obstruction to the
orbit $2$-design equation and controls completion by further orbits.

The condition $\Pfour(G,W)=0$ thus isolates the rigid regime in which the full
fourth-order obstruction is already captured by the quadratic Gram defect.
It is natural to ask how this completion problem changes when primitive
quartic invariants are present.

Sections~\ref{sec:prelim}--\ref{sec:quartic} set up the quadratic defect and
its quartic-obstruction criterion; Section~\ref{sec:completion} proves the
completion theorem; Sections~\ref{sec:optimal}--\ref{sec:unweighted}
establish the sharp correction bound, its equality case, and examples;
Section~\ref{sec:E6} treats the $E_6$ construction.

\section{Preliminaries and the quadratic defect}
\label{sec:prelim}

Let $G$ be a finite group acting orthogonally on a real Euclidean space $V$.
For a finite subset $X\subset S(V)$, write
$$
\mu_X=\frac1{|X|}\sum_{x\in X}\delta_x.
$$
The set $X$ is a spherical $t$-design if
$$
\int f\,d\mu_X=\int_{S(V)}f\,d\sigma
$$
for every real polynomial $f$ of degree at most $t$, where $\sigma$ is
normalized spherical measure.  A finitely supported probability measure
satisfying the same identities will be called a \emph{weighted spherical
$t$-design}; an ordinary finite spherical design corresponds to equal
weights.

Write $\Harm_k(V)$ for the space of real homogeneous harmonic polynomials of
degree $k$ on $V$.  If a measure is $G$-invariant, Reynolds averaging shows
that it is enough to test $G$-invariant polynomials.  We will use this only
in degrees at most four.

Let $W$ be a nontrivial irreducible real orthogonal $G$-module.  By Schur's
lemma,
$$
\D:=\End_G(W)
$$
is one of $\R,\C,\HH$.  We use a right-module convention throughout.  In
quaternionic type this means replacing the natural left action of
$\End_G(W)$ by the equivalent right action through
$\End_G(W)^{\mathrm{op}}\simeq\HH$; for $\R$ and $\C$ this distinction is
immaterial.  Put
$$
d:=\dim_{\R}\D\in\{1,2,4\},
\qquad
n:=\dim_{\D}W,
\qquad
q:=\dim_{\R}W=dn.
$$
Choose a $G$-invariant $\D$-Hermitian form on $W$ whose real part is the given
real inner product.

We now pass to
$$
V=W\oplus W.
$$
For $x=(x_1,x_2)\in V$, set
$$
M_x=[x_1\ x_2]
$$
and
$$
H(x):=M_x^*M_x\in\Herm_2(\D).
$$
We henceforth identify $x$ with its two-column matrix $M_x$ whenever
convenient; in particular, notation such as $GM_0$ denotes the orbit of the
point represented by $M_0$.
If $x\in S(V)$, then
$$
H(x)\ge0,
\qquad
\tr H(x)=1.
$$

Let
$$
E_{\D}:=\Herm_2(\D)_0
$$
be the traceless Hermitian matrices, with inner product
$$
\langle B,C\rangle_E:=\Rea\tr(BC).
$$
Then
$$
r_{\D}:=\dim_{\R}E_{\D}=d+1=
\begin{cases}
2,&\D=\R,\\
3,&\D=\C,\\
5,&\D=\HH.
\end{cases}
$$

\begin{definition}
For $x\in S(V)$, the \emph{quadratic defect} of $x$ is
$$
Z(x):=H(x)-\frac12I_2\in E_{\D}.
$$
\end{definition}

Since $G$ acts simultaneously on the two columns of $M_x$ and preserves the
chosen $\D$-Hermitian form,
$$
H(gx)=H(x),\qquad Z(gx)=Z(x)\qquad(g\in G).
$$
Thus the defect is naturally attached to an orbit.

\begin{proposition}[Multiplicity-two $2$-design criterion]
\label{prop:2design}
For $x\in S(V)$, the following are equivalent:
\begin{enumerate}[label=\textup{(\roman*)}]
\item $Gx$ is a spherical $2$-design;
\item $Z(x)=0$;
\item $H(x)=\frac12I_2$.
\end{enumerate}
\end{proposition}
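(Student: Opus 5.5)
The plan is to reduce the $2$-design condition to invariant moments of degree at most two, and then compute those moments with Schur's lemma. (ii)$\Leftrightarrow$(iii) is immediate from the definition of $Z$, so everything rests on (i)$\Leftrightarrow$(iii).

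\emph{Step 1: reduction to moment conditions.} The orbit measure $\mu_{Gx}$ is $G$-invariant, so by Reynolds averaging it suffices to test $G$-invariant polynomials of degree at most $2$.
\begin{itemize}[label=$\cdot$]
\item Degree $0$ is automatic.
\item In degree $1$, the linear $G$-invariant functionals on $V=W\oplus W$ correspond to $G$-fixed vectors. These vanish because $W$ is nontrivial and irreducible, so $W^G=0$. Hence the first moment $\frac1{|G|}\sum_g gx$ is a fixed vector and is therefore zero.
\item In degree $2$, the condition is that the second-moment operator
$$
S_x:=\frac1{|G|}\sum_{g\in G}(gx)(gx)^{T}
$$
equals $\frac1{\dim V}I_V=\frac1{2q}I_V$.
\end{itemize}

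\emph{Step 2: compute $S_x$ via Schur.} $S_x$ is a $G$-equivariant symmetric operator on $W\oplus W$. It therefore has the form of a $2\times2$ block matrix $(T_{ij})$ with $T_{ij}\in\End_G(W)=\D$. Let $\langle\cdot,\cdot\rangle_{\D}$ be the chosen Hermitian form, with the right-module convention. I would identify each block by pairing against test vectors:
$$
\langle u, T_{ij} v\rangle=\frac1{|G|}\sum_g \langle u,gx_i\rangle\langle gx_j,v\rangle .
$$
Averaging over $G$ and using Schur orthogonality (the $\D$-valued version, i.e. $\frac1{|G|}\sum_g \langle u,gw\rangle_{\D}\langle gw',v\rangle_{\D}=\frac1n\, u\,\langle w,w'\rangle_{\D}\ldots$) should give that $T_{ij}$ is right multiplication by $\frac1{q}\langle x_j,x_i\rangle_{\D}$, up to conjugation conventions. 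Equivalently, $S_x$ corresponds under $V\cong W\otimes_{\D}\D^2$ to $\frac1q(\mathrm{id}_W\otimes H(x))$ composed with the projection onto real parts.

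\emph{Main obstacle.} The delicate step is getting the normalization and the real-part bookkeeping right. The real second moment sees only $\Rea$ of the $\D$-valued pairing, and in the $\C$ and $\HH$ cases one must check that the map $H\mapsto S$ is injective on $\Herm_2(\D)$. I would handle this by a trace computation: taking traces gives $\tr S_x=\tr H(x)=1$, which fixes the constant. Injectivity I would check directly, noting that the off-diagonal block with entry $h\in\D$ acts on $W$ as right multiplication by $h$. This action is a nonzero real operator when $h\ne0$, because $W$ is a faithful $\D$-module.

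\emph{Conclusion.} Combining the steps, $S_x=\frac1{2q}I$ if and only if $H(x)=\frac12 I_2$. This gives (i)$\Leftrightarrow$(iii), and with the immediate equivalence (ii)$\Leftrightarrow$(iii) the proposition follows.
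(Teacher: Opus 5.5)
Your argument is correct, but it runs dual to the paper's. The paper never averages over the orbit. It tests $\mu_{Gx}$ against the invariant quadratics $q_B(y)=\Rea\tr(BH(y))$, $B\in\Herm_2(\D)$, which are \emph{constant} on $Gx$, so the orbit average is simply $\Rea\tr(BH(x))$. Comparing this with the spherical value $\tr(B)/2$ and using nondegeneracy of the trace pairing gives $H(x)=\frac12I_2$ at once, with no orthogonality relations and no conjugation conventions. You instead compute the frame operator $S_x\in\Sym(V)^G\cong\Herm_2(\D)$ itself. That buys you more than the equivalence: an explicit formula in which $S_x$ corresponds to $\frac1qH(x)$ (up to your conjugation convention). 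Hence $S_x-\frac1{2q}I_V$ corresponds to $\frac1qZ(x)$, and the defect is visibly the entire second-moment error. The cost is the step you leave as ``should give''. Your $\D$-valued orthogonality relation is left incomplete, a noncommutative version for $\D=\HH$ needs genuine care, and $\tr S_x=1$ fixes the constant only once proportionality to $H(x)$ is already known. You can close the step without any orthogonality relations by importing the paper's observation. For every $A\in\End_G(V)$, since each $g$ is orthogonal and commutes with $A$,
\[
\tr_{\R}(AS_x)=\frac1{|G|}\sum_{g\in G}(gx)^TA(gx)=x^TAx .
\]
Take $A$ to be multiplication by $h\in\D$ in a single block, mapping the $i$th copy of $W$ to the $j$th. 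This gives $\tr_{\R}(hT_{ij})=x_j^T(hx_i)$ for all $h\in\D$. By nondegeneracy of $(h,h')\mapsto q\Rea(hh')$ on $\D$, this determines each $T_{ij}$ from $H(x)$. In particular $T_{ii}=\frac1q\|x_i\|^2$, since imaginary units act skew-symmetrically, and $T_{12}=0$ if and only if $\langle x_1,x_2\rangle_{\D}=0$, whichever convention you use.
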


\begin{proof}
Since $W$ is nontrivial and irreducible, the first moment of every orbit
vanishes.  Thus only the second moment is relevant.

A $G$-invariant real quadratic form on $W\oplus W$ is determined by a
Hermitian matrix $B\in\Herm_2(\D)$:
$$
q_B(x)=\Rea\tr(BH(x)).
$$
The spherical average of $q_B$ is $\tr(B)/2$, because the two copies of $W$
have equal dimension.  Hence $Gx$ has the correct second moment if and only if
$$
\Rea\tr(BH(x))=\frac{\tr B}{2}
$$
for every $B\in\Herm_2(\D)$.  By nondegeneracy of the trace pairing this is
equivalent to $H(x)=I_2/2$.
\end{proof}

\begin{remark}
Proposition~\ref{prop:2design} is the multiplicity-two specialization of the
general matrix criterion in \cite{ChienKang2025}.  We include the short proof
so that the present paper can be read independently of the full
classification.
\end{remark}

\section{Primitive quartics and a character criterion}
\label{sec:primitive}
\label{sec:quartic}

Let $\Sym^4(V^*)^G$ denote the space of real $G$-invariant homogeneous
quartics on $V$.  For $B\in\Herm_2(\D)$, define the real quadratic Gram
coordinate
$$
\gamma_B(x):=\Rea\tr\bigl(BH(x)\bigr).
$$
As $B$ varies, these are exactly the real quadratic functions carried by the
Hermitian Gram matrix; this formulation avoids treating a complex or
quaternionic off-diagonal entry as a real polynomial.  Let
$$
\Gfour(G,W)\subseteq\Sym^4(V^*)^G
$$
be the space spanned by the products $\gamma_B\gamma_C$ with
$B,C\in\Herm_2(\D)$.

\begin{definition}
The \emph{primitive quartic obstruction space} is
$$
\Pfour(G,W):=
\Sym^4(V^*)^G/\Gfour(G,W).
$$
We say that $W\oplus W$ has \emph{no primitive quartic obstruction} if
$\Pfour(G,W)=0$.
\end{definition}

Thus $\Pfour(G,W)=0$ means exactly that every invariant quartic of the pair
$(x_1,x_2)$ is determined by its Hermitian Gram matrix.  This condition is
character-theoretic.  Character and Molien-series methods for spherical
designs from group orbits are standard; see, for example,
\cite{MohammadpourWaldron2024}.

Set
$$
h_{\D}:=\dim_{\R}\Herm_2(\D)=d+2
=
\begin{cases}
3,&\D=\R,\\
4,&\D=\C,\\
6,&\D=\HH,
\end{cases}
$$
and
$$
c_{\D}:=\dim_{\R}\Sym^2(\Herm_2(\D))
=\binom{h_{\D}+1}{2}
=
\begin{cases}
6,&\D=\R,\\
10,&\D=\C,\\
21,&\D=\HH.
\end{cases}
$$

\begin{lemma}
\label{lem:gramind}
Assume $n=\dim_{\D}W\ge2$.  Then the quadratic products in the Gram
coordinates are linearly independent.  In particular,
$$
\dim\Gfour(G,W)=c_{\D}.
$$
\end{lemma}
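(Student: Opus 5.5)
The plan is to reduce the statement to a fact about the image of the Gram map. The map $H\colon V\to\Herm_2(\D)$ is linearly dual to the coordinates $\gamma_B$. If $B_1,\dots,B_{h_{\D}}$ is a real basis of $\Herm_2(\D)$, then the products $\gamma_{B_i}\gamma_{B_j}$ with $i\le j$ are exactly the pullbacks under $H$ of the degree-two monomials in the linear coordinates $\Rea\tr(B_i\,\cdot)$ on $\Herm_2(\D)$. There are $c_{\D}=\binom{h_{\D}+1}{2}$ such products, and they span $\Gfour(G,W)$. A linear relation $\sum a_{ij}\gamma_{B_i}\gamma_{B_j}=0$ on $V$ says precisely that the quadratic polynomial $Q=\sum a_{ij}\ell_i\ell_j$ on $\Herm_2(\D)$ vanishes on the image $H(V)$. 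It therefore suffices to show that no nonzero quadratic form on $\Herm_2(\D)$ vanishes identically on $H(V)$.

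Step 1 is to identify $H(V)$ when $n\ge2$. I claim $H(V)$ is the entire cone $\Herm_2(\D)_{\ge0}$ of positive semidefinite Hermitian $2\times2$ matrices over $\D$. Since $n\ge2$, there are two $\D$-orthonormal vectors $e_1,e_2\in W$ with respect to the chosen Hermitian form. Any $P\ge0$ factors as $P=A^*A$ for some $A\in M_2(\D)$, for instance by spectral decomposition over $\D$, which holds for $\HH$ as well. Set $x_j=e_1A_{1j}+e_2A_{2j}$, using right scalar multiplication to match the right-module convention. Then $M_x=[e_1\ e_2]A$, and $H(x)=A^*[e_1\ e_2]^*[e_1\ e_2]A=A^*A=P$. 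The one point to check is that $[e_1\ e_2]^*[e_1\ e_2]=I_2$ under the paper's conventions for the form. This is just orthonormality.

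Step 2 is to show that a quadratic form vanishing on this cone is zero. The cone $\Herm_2(\D)_{\ge0}$ contains the positive definite matrices, which form a nonempty open subset of the real vector space $\Herm_2(\D)$. A polynomial vanishing on a nonempty open set is identically zero, so $Q=0$ and every $a_{ij}=0$. This proves that the products are linearly independent, and hence $\dim\Gfour(G,W)=c_{\D}$.

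I expect the main obstacle to be Step 1 in the quaternionic case. There one must make sure that the Gram matrix $M^*M$, computed with the $\HH$-valued Hermitian form and the right action, really transforms as $A^*(\cdot)A$. One must also check that the factorization $P=A^*A$ exists over $\HH$. I would settle both by noting that the $\D$-Hermitian form is sesquilinear in the right-module sense, $\langle ua,vb\rangle=\bar a\langle u,v\rangle b$, and that quaternionic Hermitian matrices are unitarily diagonalizable with real eigenvalues. Finally, the hypothesis $n\ge2$ is genuinely needed. If $n=1$, the image of $H$ consists of rank-one matrices, and the determinant-type quadratic vanishes on it, so independence fails.
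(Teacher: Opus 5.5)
Your proposal is correct and takes the same approach as the paper. Both arguments show that the image of the Gram map contains the open cone of positive definite matrices in $\Herm_2(\D)$, so any quadratic relation among the Gram coordinates vanishes on a nonempty open set and must be zero; your explicit realization $M_x=[e_1\ e_2]A$ with $A^*A=P$, using two $\D$-orthonormal vectors and right-module sesquilinearity, just fills in the step the paper states in one line.
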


\begin{proof}
Every positive definite matrix in $\Herm_2(\D)$ occurs as the Gram matrix of
two vectors in $\D^n$ when $n\ge2$.  Hence the image of the Gram map contains
the open cone of positive definite Hermitian matrices.  A quadratic polynomial
in the real Gram coordinates which vanishes identically on $V$ therefore vanishes on
an open subset of $\Herm_2(\D)$ and must be zero.
\end{proof}

Let $\chi$ be the real character of $W$.  The invariant Euclidean form
identifies $V$ with $V^*$, so their symmetric powers have the same
characters.  Since $V=W\oplus W$ has character $2\chi$, define
$$
m_4(W):=\dim\Sym^4(V^*)^G
=\langle \chi_{\Sym^4(V)},1_G\rangle.
$$

\begin{proposition}[Character criterion]
\label{prop:character}
Assume $n\ge2$.  Then
$$
\dim\Pfour(G,W)=m_4(W)-c_{\D}.
$$
In particular,
$$
\Pfour(G,W)=0
$$
if and only if
$$
m_4(W)=
\begin{cases}
6,&\D=\R,\\
10,&\D=\C,\\
21,&\D=\HH.
\end{cases}
$$

Moreover,
$$
m_4(W)
=
\frac1{|G|}\sum_{g\in G}
\left[
\frac23\chi(g)^4
+2\chi(g)^2\chi(g^2)
+\frac12\chi(g^2)^2
+\frac43\chi(g)\chi(g^3)
+\frac12\chi(g^4)
\right].
$$
\end{proposition}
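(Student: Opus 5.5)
The argument has two independent parts: a dimension count for $\Pfour(G,W)$, and a character formula for $m_4(W)$.

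\emph{The dimension formula.} We work directly from the definition. Since $\Pfour(G,W)=\Sym^4(V^*)^G/\Gfour(G,W)$, we have
$$
\dim\Pfour(G,W)=\dim\Sym^4(V^*)^G-\dim\Gfour(G,W).
$$
By the definition of $m_4(W)$ the first term is $m_4(W)$. By Lemma~\ref{lem:gramind}, using $n\ge2$, the second term is $c_{\D}$.

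For completeness we would also check that $\Gfour(G,W)\subseteq\Sym^4(V^*)^G$, i.e.\ that each product $\gamma_B\gamma_C$ is a $G$-invariant quartic. This holds because $H(gx)=H(x)$. The equivalence $\Pfour=0\iff m_4(W)=c_{\D}$ then follows immediately, and we read off the values $6,10,21$ from the table of $c_{\D}$.

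\emph{The character formula.} The dimension of the invariants is the average of the character:
$$
m_4(W)=\frac1{|G|}\sum_{g}\chi_{\Sym^4V}(g),
$$
where $V$ and $V^*$ have the same character. We then use the standard formula for the character of the fourth symmetric power. For a representation $U$ with character $\psi$,
$$
\chi_{\Sym^4U}(g)=\frac1{24}\bigl(\psi(g)^4+6\psi(g)^2\psi(g^2)+3\psi(g^2)^2+8\psi(g)\psi(g^3)+6\psi(g^4)\bigr).
$$
This comes from averaging over the cycle types of $S_4$: there are 1, 6, 3, 8 and 6 permutations of the cycle types $1^4$, $2\,1^2$, $2^2$, $3\,1$ and $4$ respectively. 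Equivalently, it is the degree-4 term of $\exp\bigl(\sum_k p_k/k\bigr)$ in the power sums $p_k=\psi(g^k)$, which are evaluated via eigenvalues.

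We then substitute $\psi=2\chi$, so that $\psi(g^k)=2\chi(g^k)$. The coefficients become
$$
\tfrac{16}{24}=\tfrac23,\quad \tfrac{6\cdot8}{24}=2,\quad \tfrac{3\cdot4}{24}=\tfrac12,\quad \tfrac{8\cdot4}{24}=\tfrac43,\quad \tfrac{6\cdot2}{24}=\tfrac12,
$$
which match the stated formula.

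There is no real obstacle. The only point needing care is justifying the $\Sym^4$ character formula. The cleanest route is to diagonalize $g$ over $\C$ (it has finite order) and use Newton's identity for the complete homogeneous symmetric polynomial $h_4$ in terms of the power sums $p_1,\dots,p_4$. Since $V$ is a real representation, the real dimension of the invariants equals the complex dimension for $V\otimes\C$, so computing over $\C$ gives the right answer.
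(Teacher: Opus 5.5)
Your proposal is correct and follows the paper's proof. The dimension count comes from Lemma~\ref{lem:gramind} together with the inclusion $\Gfour(G,W)\subseteq\Sym^4(V^*)^G$, and the formula for $m_4(W)$ comes from averaging the standard $\Sym^4$ character identity with $\psi=2\chi$; your added details (the $S_4$ cycle-type derivation, the coefficient check, and the complexification remark) are all accurate.
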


\begin{proof}
The first assertion follows from Lemma~\ref{lem:gramind}, since
$\Gfour(G,W)\subseteq\Sym^4(V^*)^G$.  For the formula, use the standard
identity
$$
\chi_{\Sym^4 V}(g)
=
\frac1{24}\left(
\psi(g)^4+6\psi(g)^2\psi(g^2)+3\psi(g^2)^2
+8\psi(g)\psi(g^3)+6\psi(g^4)
\right)
$$
with $\psi=2\chi$, and average over $G$.
\end{proof}

For later use, the cubic invariant multiplicity is equally easy to test:
\begin{equation}
\label{eq:cubic-character}
m_3(W):=\dim\Sym^3((W\oplus W)^*)^G
=
\frac1{|G|}\sum_{g\in G}
\left[
\frac43\chi(g)^3
+2\chi(g)\chi(g^2)
+\frac23\chi(g^3)
\right].
\end{equation}
Thus the cubic condition, when one wants the original configuration rather
than its antipodalization, is also determined directly from the same character
data.  Since $V^G=0$, the harmonic decomposition
$$
\Sym^3(V^*)=\Harm_3(V)\oplus \|x\|^2\Harm_1(V)
$$
shows that
$$
m_3(W)=\dim \Harm_3(V)^G.
$$
Thus $m_3(W)=0$ is equivalent here to the absence of invariant harmonic
cubics.

\begin{remark}
The Frobenius--Schur type $\D$ is itself determined by character data.  Thus,
once the power maps in the character table are known, the applicability of
the defect completion theorem is completely character-theoretic.
\end{remark}

\subsection{Two useful families}

The character criterion shows that the quartic hypothesis is not an isolated
phenomenon.

\begin{proposition}[Dihedral family]
\label{prop:dihedral}
Let $W=\R^2$ be the standard reflection representation of the dihedral group
$I_2(m)$.  Then
$$
\Pfour(I_2(m),W)=0
$$
for
$$
m=3\quad\text{or}\quad m\ge5.
$$
For $m=4$ there are primitive quartic invariants.
\end{proposition}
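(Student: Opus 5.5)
The plan is to verify the criterion of Proposition~\ref{prop:character} directly, computing $m_4(W)$ by averaging separately over rotations and reflections rather than expanding the full power-sum formula. First I would record the preliminaries. For $m\ge3$ the reflection representation $W=\R^2$ of $I_2(m)$ is irreducible. Its commutant is $\R$, since an endomorphism commuting with a rotation of order $m\ge3$ is a complex scalar, and commuting with a reflection forces it to be real. Hence $\D=\R$, $n=2$, and Lemma~\ref{lem:gramind} applies. By Proposition~\ref{prop:character} it then suffices to show $m_4(W)=6$ for $m=3$ and $m\ge5$, and $m_4(W)>6$ for $m=4$.

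I would split $m_4(W)=\frac1{2m}\sum_{g}\chi_{\Sym^4 V}(g)$ into the rotation and reflection contributions.

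\emph{Reflections.} Each reflection acts on $V=W\oplus W$ with eigenvalues $(1,1,-1,-1)$. Its $\Sym^4$ trace is therefore the $t^4$ coefficient of $(1-t)^{-2}(1+t)^{-2}=(1-t^2)^{-2}$, namely $3$. The $m$ reflections contribute $3m$ in total.

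\emph{Rotations.} Rather than sum traces, I would compute the dimension of invariants of the cyclic rotation subgroup $C_m$ on $\Sym^4(V^*)$, since $\sum_{\text{rot}}\chi_{\Sym^4V}=m\cdot\dim\Sym^4(V^*)^{C_m}$. Identify each copy of $W$ with $\C$ via coordinates $z_1,z_2$, so that rotation by $\zeta$ acts by $z_j\mapsto\zeta z_j$. A quartic monomial $z_1^az_2^b\bar z_1^c\bar z_2^e$ with $a+b+c+e=4$ is then multiplied by $\zeta^{(a+b)-(c+e)}$. The exponent difference lies in $\{0,\pm2,\pm4\}$, so:
\begin{itemize}
\item For $m=3$ or $m\ge5$, only difference $0$ survives. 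The invariant monomials are a holomorphic quadratic times an antiholomorphic quadratic, giving $3\cdot3=9$ complex monomials. This space is closed under conjugation, so its real dimension of real invariants is $9$.
\item For $m=4$, differences $\pm4$ also survive, adding $5+5$ monomials, for a total of $19$.
\end{itemize}
The difference $\pm2$ is excluded except for $m=2$, which is not in play.

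Putting these together gives $m_4=(9m+3m)/(2m)=6$ for $m=3$ or $m\ge5$, so $\Pfour=0$. For $m=4$ we get $m_4=(19\cdot4+3\cdot4)/8=11>6$, so $\dim\Pfour=5$ and primitive quartics exist.

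The only step needing care is the rotation count. One must confirm that the real dimension of real $C_m$-invariants equals the complex count of invariant monomials, which holds because the invariant span is conjugation-stable. One must also check that $m=3$ genuinely excludes $\pm4$, which it does since $4\not\equiv0\pmod 3$. As a cross-check, I would evaluate the explicit formula of Proposition~\ref{prop:character} for $m=3$, with $\chi=2,-1,0$ on the identity, the rotations and the reflections, and confirm it returns $6$.
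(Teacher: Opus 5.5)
Your proof is correct. It shares the paper's key step: writing each copy of $W$ as $\C$, so that quartic monomials carry rotation weights in $\{0,\pm2,\pm4\}$, with only weight $0$ surviving for $m=3$ or $m\ge5$ and weights $\pm4$ also surviving for $m=4$. Where you differ is in how you close the argument.

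The paper argues structurally. Weight-zero quartics are $SO(2)$-invariant, so imposing one reflection makes the degree-four $I_2(m)$-invariants coincide with the $O(2)$-invariant quartics of two vectors. By the first fundamental theorem for $O(2)$, these are polynomials in the three Gram entries. For $m=4$ the paper simply observes that the weight-$\pm4$ invariants are extra.

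You instead turn the weight analysis into a count: $9$ invariant monomials for $C_m$, plus reflection trace $3$ on $\Sym^4V$. Averaging gives $m_4(W)=6$, and you conclude through Proposition~\ref{prop:character}. There the role of the first fundamental theorem is played by Lemma~\ref{lem:gramind}, the linear independence of the six Gram products.

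Your route is purely numerical and fits the section's character-criterion framing. It also gives the exact value $\dim\Pfour(I_2(4),W)=11-6=5$. This makes explicit a point the paper leaves implicit for $m=4$: reflection-invariant combinations such as $\Rea z_1^4$ exist and lie outside the weight-zero span that contains $\Gfour$. In exchange, the paper's route identifies the invariant space itself rather than only its dimension, and needs no trace computation.
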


\begin{proof}
After complexification, a rotation acts on $W_{\C}$ with weights $1$ and
$-1$.  Hence degree-four monomials on
$(W\oplus W)_{\C}$ have rotation weights
$$
-4,-2,0,2,4.
$$
If $m=3$ or $m\ge5$, invariance under the rotation subgroup forces weight
zero.  After imposing a reflection, the degree-four invariants are therefore
the same as the $O(2)$-invariant quartics of two vectors, which are generated
by the three real Gram entries.  For $m=4$, weight $\pm4$ is also invariant,
giving additional quartics.
\end{proof}

\begin{remark}
The case $I_2(3)$ has invariant cubics even though it has no primitive quartic
obstruction.  It therefore gives a simple example where the defect equations
solve the even fourth-moment problem but antipodalization is still needed to
obtain a spherical $4$-design.
\end{remark}

The next proposition draws on a second, standard notion also called a
\emph{design}: a finite group $G\le U(T)$ is a \emph{unitary $t$-design} if
its average of degree-$(t,t)$ polynomials in the matrix entries agrees with
the Haar average on $U(T)$ \cite{Webb2016,Zhu2017}.  This is a property of
the group's action on $T$, distinct from the spherical designs of
Section~\ref{sec:prelim}; here it is used only to verify the primitive
quartic condition $\Pfour(G,W)=0$, and the resulting spherical designs are
constructed only afterward, in Theorem~\ref{thm:completion}.

\begin{proposition}[Unitary-design family]
\label{prop:unitaryfamily}
Let $T$ be a complex vector space of dimension at least two and let
$G\le U(T)$ be a finite unitary $2$-design.  Assume that $G$ contains a
scalar $\zeta I$ with $\zeta^4\ne1$.  Let $W=T_{\R}$ be the underlying real
representation.  Then $\End_G(W)=\C$ and $\Pfour(G,W)=0$.
\end{proposition}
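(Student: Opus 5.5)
The plan is to compute everything over $\C$ using the decomposition $W_{\C}=T\oplus\bar T$, and to use the unitary $2$-design property only through its standard character form: $\frac1{|G|}\sum_g|\tr g|^4=2$ when $\dim T\ge2$. Irreducibility of $T$ follows from $\frac1{|G|}\sum_g|\tr g|^2=1$, which is the unitary $1$-design property implied by the $2$-design property. Since the central element $\zeta I$ with $\zeta^4\ne1$ acts on $T$ and on $\bar T$ by different scalars ($\zeta\ne\bar\zeta$, as $\zeta^2\ne1$), the representations $T$ and $\bar T$ are non-isomorphic. Hence $W_{\C}$ is a sum of two non-isomorphic irreducibles, so $W$ is real-irreducible of complex type and $\End_G(W)=\C$. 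This gives $d=2$, $n=\dim T\ge2$, and $c_{\D}=10$. By Proposition~\ref{prop:character} it then suffices to show $m_4(W)=10$.

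Rather than use the power-map formula, I would compute $\dim\Sym^4(V_{\C}^*)^G$ directly. Here $V_{\C}\cong (T\otimes\C^2)\oplus(\bar T\otimes\C^2)$, so
$$
\Sym^4 V_{\C}=\bigoplus_{a+b=4}\Sym^a(T\otimes\C^2)\otimes\Sym^b(\bar T\otimes\C^2).
$$
The scalar $\zeta I$ acts on the $(a,b)$ summand by $\zeta^{a-b}$. We have $a-b\in\{\pm4,\pm2,0\}$, and $\zeta^{\pm4},\zeta^{\pm2}\ne1$, so only $(a,b)=(2,2)$ can contribute invariants. This is the only place the hypothesis on $\zeta$ enters, and it plays the same role as the weight argument in Proposition~\ref{prop:dihedral}. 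It remains to count
$$
\dim\bigl(\Sym^2(T\otimes\C^2)\otimes\overline{\Sym^2(T\otimes\C^2)}\bigr)^G
=\bigl\langle \Sym^2(T\otimes\C^2),\Sym^2(T\otimes\C^2)\bigr\rangle_G .
$$

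For this count, apply Cauchy's formula:
$$
\Sym^2(T\otimes\C^2)=\Sym^2T\otimes\Sym^2\C^2\ \oplus\ \Lambda^2T\otimes\Lambda^2\C^2 .
$$
As a $G$-module this is $3\,\Sym^2T\oplus\Lambda^2T$. The unitary $2$-design condition says $\langle T\otimes T,T\otimes T\rangle_G=2$. For $n\ge2$ both $\Sym^2T$ and $\Lambda^2T$ are nonzero, so they must be irreducible and non-isomorphic to each other. The inner product is therefore $3^2+1^2=10$, which gives $m_4(W)=10$ and hence $\Pfour(G,W)=0$.

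The main obstacle I expect is bookkeeping between the real and complex pictures. I need that $\dim_{\R}$ of the real invariants equals $\dim_{\C}$ of the complexified invariants, which is standard. I also need the identification of the $(2,2)$ block with $\Sym^2\otimes\overline{\Sym^2}$, and its invariants with the Hom-space count, to be justified carefully. As a consistency check, $10$ matches the ten products $\gamma_B\gamma_C$ counted by Lemma~\ref{lem:gramind}.
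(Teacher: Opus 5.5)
Your proof is correct. It shares the paper's first step: the central scalar $\zeta I$ kills every bidegree except $(2,2)$. After that the two arguments diverge.

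The paper handles the $(2,2)$ block qualitatively. Since $G$ is a unitary $2$-design, its bidegree-$(2,2)$ invariants coincide with those of $U(T)$, and Weyl's first fundamental theorem says these are generated by the entries of $M^*M$, so they lie in $\Gfour(G,W)$ outright. For the commutant, the paper notes that a real-linear operator commuting with $\zeta I$ commutes with $iI=(\zeta I-aI)/b$, hence is complex-linear, and the $1$-design property then forces scalars.

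You instead obtain complex type from the central character ($T\not\cong\bar T$) and replace the first fundamental theorem by a dimension count. The frame-potential value $\frac1{|G|}\sum_g|\tr g|^4=2$ forces $\Sym^2T$ and $\Lambda^2T$ to be irreducible and non-isomorphic. Cauchy's formula then gives $m_4(W)=\langle 3\Sym^2T\oplus\Lambda^2T,\,3\Sym^2T\oplus\Lambda^2T\rangle_G=10$. Finally, Proposition~\ref{prop:character}, which rests on the independence Lemma~\ref{lem:gramind}, turns $m_4(W)=c_{\C}$ into $\Pfour(G,W)=0$.

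What each route buys:
\begin{itemize}
\item The paper's route names the generators directly and needs no counting, at the cost of citing the first fundamental theorem.
\item Your route stays inside the character-theoretic framework the paper itself promotes, and it computes $m_4(W)=10$ explicitly, matching the table entry for $\widetilde{\mathrm{Cliff}}_1$.
\item Your route also gives a converse. In general the same count equals $9\langle\Sym^2T,\Sym^2T\rangle_G+6\langle\Sym^2T,\Lambda^2T\rangle_G+\langle\Lambda^2T,\Lambda^2T\rangle_G\ge10$, with equality exactly when the frame potential equals $2$. So under the central-scalar hypothesis, with $W$ of complex type and $\dim T\ge2$, $\Pfour(G,W)=0$ holds precisely when $G$ is a unitary $2$-design, by the standard frame-potential characterization.
\end{itemize}
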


\begin{proof}
Write a real quartic in complex coordinates and decompose it into bidegrees
$(a,b)$ in the variables and their conjugates, where $a+b=4$.  Note
$\zeta^4\ne1$ already forces $\zeta^2\ne1$, since $\zeta^2=1$ would give
$\zeta^4=(\zeta^2)^2=1$.  Invariance under $\zeta I$ therefore forces $a=b=2$,
since the other possible bidegrees have $a-b=\pm2$ or $\pm4$.  Since $G$ is a
unitary $2$-design, its invariants of bidegree $(2,2)$ agree with those of
$U(T)$.  By the first fundamental theorem for the unitary group, these are
generated by Hermitian pairings of the two vector variables, hence by the
entries of $M^*M$ \cite{Weyl1939}.

It remains to determine the real commutant.  Since $|\zeta|=1$ and
$\zeta^4\ne1$, the scalar $\zeta$ is non-real.  If a real-linear operator
$A$ commutes with $G$, then in particular it commutes with multiplication by
$\zeta$.  Writing $\zeta=a+bi$ with $b\ne0$, it follows that $A$ commutes
with the complex structure
$$
iI=\frac{\zeta I-aI}{b}.
$$
Hence every real-linear element of $\End_G(W)$ is in fact complex-linear.
The unitary $1$-design consequence says that the complex-linear commutant is
the scalar algebra.  Therefore $\End_G(W)=\C$.
\end{proof}

\begin{remark}[Phase extension]
\label{rem:phaseextension}
The scalar hypothesis in Proposition~\ref{prop:unitaryfamily} can always be
arranged by a harmless central extension.  If $K\le U(T)$ is any finite
unitary $2$-design and $\zeta=e^{\pi i/4}$, then
$$
G=\langle K,\zeta I\rangle
$$
is again a finite unitary $2$-design: the added scalar phases cancel in the
balanced degree-$(2,2)$ moments.  Proposition~\ref{prop:unitaryfamily}
therefore applies to this phase extension.  The Clifford family below gives
a standard unbounded-dimensional realization in which such a phase is
already built into the chosen finite lift.
\end{remark}

To show that Proposition~\ref{prop:unitaryfamily} is not satisfied only by
finitely many sporadic examples, we now exhibit an infinite family, indexed
by the number of qubits, for which the hypothesis holds in every dimension.
Since the multiqubit Clifford group is less classical here than the
reflection groups used elsewhere in the paper, we spell out its
construction explicitly.  Put $T_r=(\C^2)^{\otimes r}$.  On one qubit let
$$
X=\begin{pmatrix}0&1\\1&0\end{pmatrix},
\qquad
Z=\begin{pmatrix}1&0\\0&-1\end{pmatrix}
$$
be the standard Pauli matrices (used only in this construction, and
unrelated to the point configuration $X$ or the defect $Z$ used elsewhere in
the paper), and let $X_j,Z_j$ act as $X,Z$ on the $j$th tensor factor and
trivially on the others.  The $r$-qubit Pauli group is
$$
\mathcal P_r=\langle iI,X_1,Z_1,\ldots,X_r,Z_r\rangle\le U(T_r).
$$
The projective Clifford group is the normalizer of $\mathcal P_r$ in
$U(T_r)$ modulo scalar phases.  We use the standard finite phase lift
$\widetilde{\mathrm{Cliff}}_r$, generated by the one-qubit gates
$$
\mathsf H=\frac1{\sqrt2}
\begin{pmatrix}1&1\\1&-1\end{pmatrix},
\qquad
S=\begin{pmatrix}1&0\\0&i\end{pmatrix},
$$
acting on individual tensor factors, together with the controlled-NOT gates
$$
\mathrm{CNOT}_{jk}|a,b\rangle=|a,a+b\pmod2\rangle
$$
on pairs of tensor factors and the scalar $e^{\pi i/4}I$.  This is a finite
unitary group; modulo scalar phases it is the usual Clifford group.  The
choice of finite lift is immaterial for unitary design moments, since scalar
phases cancel between matrix entries and their conjugates.

\begin{corollary}[Multiqubit Clifford family]
\label{cor:cliffordfamily}
For every $r\ge1$, let $W_r=(T_r)_{\R}$ be the underlying real
representation of $\widetilde{\mathrm{Cliff}}_r$.  Then
$\End_{\widetilde{\mathrm{Cliff}}_r}(W_r)=\C$ and
$\Pfour(\widetilde{\mathrm{Cliff}}_r,W_r)=0$.  In particular,
$$
\dim_{\R}W_r=2^{r+1}\longrightarrow\infty,
\qquad
\dim_{\R}E_{\C}=3.
$$
Thus the no-primitive-quartic hypothesis of
Theorem~\ref{thm:completion} occurs in an unbounded-dimensional family while
the associated defect space has fixed dimension three.
\end{corollary}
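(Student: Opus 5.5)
The plan is to apply Proposition~\ref{prop:unitaryfamily} with $T=T_r$ and $G=\widetilde{\mathrm{Cliff}}_r$. Then only three things need checking: $\dim_{\C}T_r\ge2$, the group contains a scalar $\zeta I$ with $\zeta^4\ne1$, and the group is a finite unitary $2$-design. The first holds because $\dim_{\C}T_r=2^r\ge2$ for all $r\ge1$. The second holds by construction: $\zeta=e^{\pi i/4}$ is one of the generators, and $\zeta^4=e^{\pi i}=-1\ne1$. Finiteness is part of the standard construction recalled above. The group is generated by $\mathsf H$, $S$, the $\mathrm{CNOT}$ gates and a phase of finite order; it normalizes the finite Pauli group; and its image modulo scalars is finite. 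The scalar part is contained in $\langle e^{\pi i/4}\rangle$ together with the finitely many scalars produced from the generators, all of which are powers of $e^{\pi i/4}$ since the generator entries lie in $\mathbb Q(e^{\pi i/4})$ up to the $1/\sqrt2$ factors.

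The central input is that the multiqubit Clifford group is a unitary $2$-design, indeed a unitary $3$-design, for every $r\ge1$. This is a known theorem, and I would cite it from \cite{Webb2016,Zhu2017} rather than reprove it. If one wanted a self-contained check of the $2$-design property, the route is the standard one. Being a unitary $2$-design is equivalent to $\frac1{|G|}\sum_g|\tr g|^4=2$, that is, to $G$ having exactly two irreducible constituents on $T\otimes T^*$ (the trivial one and the adjoint), equivalently on $\End(T)$. Under conjugation, $\End(T_r)$ decomposes as the scalars plus the span of the $4^r-1$ nonidentity Pauli operators. The Clifford group permutes the nonidentity Paulis transitively up to phase, because it acts transitively on the nonzero vectors of $\mathbb F_2^{2r}$ through $\mathrm{Sp}_{2r}(\mathbb F_2)$. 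One then checks that the traceless part is irreducible. I expect this transitivity-to-irreducibility step to be the main (though classical) obstacle; I would defer to the references for it. The remark on phase extension shows that the chosen finite lift does not affect the design property, since scalar phases cancel in degree-$(2,2)$ moments.

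Proposition~\ref{prop:unitaryfamily} then gives $\End_G(W_r)=\C$ and $\Pfour(G,W_r)=0$ directly. The dimension statements are immediate: $\dim_{\R}W_r=2\dim_{\C}T_r=2^{r+1}$, and since $\D=\C$ we have $\dim_{\R}E_{\C}=r_{\C}=d+1=3$, independent of $r$. That gives the unbounded-dimensional family with a fixed three-dimensional defect space.
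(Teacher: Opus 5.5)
Your proposal is correct and follows the paper's proof. You apply Proposition~\ref{prop:unitaryfamily} using the cited unitary $3$-design (hence $2$-design) property of the Clifford group and the built-in scalar $e^{\pi i/4}I$ with $(e^{\pi i/4})^4=-1\ne1$, then read off $\dim_{\R}W_r=2^{r+1}$ and $\dim_{\R}E_{\C}=3$. Your side argument about the scalar subgroup is not needed, since the paper takes finiteness of the lift as given. As written it is also incomplete: a unimodular element of $\mathbb Q(e^{\pi i/4})$ need not be a root of unity. Comparing determinants fixes this, because every generator has an $8$th root of unity as determinant.
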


\begin{proof}
The multiqubit Clifford group is a unitary $3$-design for every $r\ge1$
\cite{Webb2016,Zhu2017}, hence in particular a unitary $2$-design.  The
chosen finite lift contains $e^{\pi i/4}I$, whose fourth power is $-1\ne1$.
Proposition~\ref{prop:unitaryfamily} therefore applies.  Since
$\dim_{\C}T_r=2^r$, the final dimension statements follow immediately.
\end{proof}

\section{The quadratic-defect completion theorem}
\label{sec:completion}

We now isolate the even moments.  For a probability measure $\mu$ on $S(V)$,
write
$$
\mu^{\pm}:=\frac12\bigl(\mu+(-I)_*\mu\bigr)
$$
for its antipodalization.  Antipodalization leaves all even moments unchanged
and kills all odd moments.

\begin{lemma}[Spherical covariance of the defect]
\label{lem:cov}
Let $\sigma$ be normalized spherical measure on $S(V)=S^{2q-1}$.  Then
$$
\int Z(x)\,d\sigma(x)=0
$$
and
$$
\int Z(x)\otimes Z(x)\,d\sigma(x)
=
\frac{1}{2(q+1)}I_{E_{\D}}.
$$
\end{lemma}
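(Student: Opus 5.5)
The plan is to exploit a symmetry of $\sigma$ that acts on the defect space $E_{\D}$, so that Schur-type arguments force both moments into the stated shape, and then to pin down the single remaining constant by one explicit computation. The symmetry comes from the multiplicity space. Let $U\in U_2(\D)$ be a unitary $2\times2$ matrix over $\D$, i.e.\ $O(2)$, $U(2)$ or $Sp(2)$ according to type. Right multiplication $M_x\mapsto M_xU$ is a real-orthogonal map of $V$, because it preserves $\tr(M_x^*M_x)=\|x\|^2$; here the right-module convention is used. Hence it preserves $\sigma$. It acts on Gram matrices by $H(xU)=U^*H(x)U$, and so $Z(xU)=U^*Z(x)U$.

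For the mean, invariance of $\sigma$ gives that $\bar Z:=\int Z\,d\sigma\in E_{\D}$ satisfies $U^*\bar ZU=\bar Z$ for all $U\in U_2(\D)$. A traceless Hermitian matrix commuting with all of $U_2(\D)$ is zero: taking $U$ diagonal with entries $\pm1$ kills the off-diagonal entry, and the swap matrix forces equal diagonal entries. One could also argue directly that $\int x_i^*x_j\,d\sigma=\frac12\delta_{ij}$, using the symmetries $x_2\mapsto -x_2$ and $x_1\leftrightarrow x_2$.

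For the covariance, the operator $C:=\int Z\otimes Z\,d\sigma$ is a symmetric endomorphism of $E_{\D}$. It commutes with the conjugation action $B\mapsto U^*BU$ of $U_2(\D)$. The key step is that this action on $E_{\D}\cong\R^{d+1}$ is irreducible. It factors through the standard double covers $U_2(\D)\to SO(d+1)$ (up to the extra reflection in the real case): for $\D=\R$ this is rotation by twice the angle on $\R^2$, for $\D=\C$ it is $SU(2)\to SO(3)$, and for $\D=\HH$ it is $Sp(2)\to SO(5)$. In each case the image acts transitively on the unit sphere of $E_{\D}$, hence irreducibly. A symmetric commuting operator on an absolutely irreducible real representation is scalar; absolute irreducibility holds because the image is all of $SO(d+1)$. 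Therefore $C=c\,I_{E_{\D}}$.

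It then suffices to compute one diagonal entry, and I would choose the unit vector $B_0=\frac1{\sqrt2}\operatorname{diag}(1,-1)$. Then $\langle Z(x),B_0\rangle_E=\frac1{\sqrt2}(\|x_1\|^2-\|x_2\|^2)$. In real coordinates $y_1,\dots,y_N$ on $S^{N-1}$ with $N=2q$, this is $\frac1{\sqrt2}\sum_i\varepsilon_iy_i^2$, where $\varepsilon_i=\pm1$ and $\sum\varepsilon_i=0$. Using the standard moments $\int y_i^4\,d\sigma=\frac{3}{N(N+2)}$ and $\int y_i^2y_j^2\,d\sigma=\frac1{N(N+2)}$ for $i\ne j$, we get
$$
\int\Bigl(\sum_i\varepsilon_iy_i^2\Bigr)^2d\sigma=\frac{3N+\bigl((\sum\varepsilon_i)^2-N\bigr)}{N(N+2)}=\frac{2}{N+2}.
$$
Hence $c=\frac12\cdot\frac{2}{2q+2}=\frac1{2(q+1)}$, as claimed.

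I expect the main obstacle to be the irreducibility step in the quaternionic case. There one must check carefully that, with the right-module convention, $U\mapsto(B\mapsto U^*BU)$ really is the spin double cover $Sp(2)\to SO(5)$ on the $5$-dimensional traceless part. If that identification is awkward, the fallback is to verify transitivity on the unit sphere directly: every traceless Hermitian quaternionic $2\times2$ matrix is unitarily diagonalizable, by the quaternionic spectral theorem, to $\operatorname{diag}(\lambda,-\lambda)$. This transitivity already gives that $C$ is scalar, since a symmetric operator whose quadratic form is constant on the unit sphere is scalar.
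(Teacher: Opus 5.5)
Your proof is correct, but it obtains isotropy by a different mechanism than the paper.

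\textbf{The paper's route.} It never uses symmetry for the covariance. It writes $\langle B,Z(x)\rangle_E=x^TA_Bx$ with $A_B$ traceless; in fact $A_B$ is right multiplication $M\mapsto MB$. It then asserts the trace identity $\tr_{\R}(A_BA_C)=q\langle B,C\rangle_E$ and substitutes it into the general formula $\int(x^TAx)(x^TCx)\,d\sigma=\frac{2\tr(AC)+\tr A\,\tr C}{N(N+2)}$. This yields every entry of the covariance at once, off-diagonal entries included. The mean is handled only by a one-line appeal to spherical symmetry.

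\textbf{Your route.} You use the action $M_x\mapsto M_xU$ of $U_2(\D)$, which preserves $\sigma$ and acts on defects by conjugation, transitively on $S(E_{\D})$. This forces the covariance to be scalar. You then evaluate only the entry at $B_0=\frac1{\sqrt2}\operatorname{diag}(1,-1)$, where the computation reduces to elementary coordinate moments.

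\textbf{What each buys.} The paper's argument is shorter and uses no representation theory. However, it rests on the trace identity for arbitrary $B,C$, and its verification in the quaternionic case is only sketched. That verification requires, for instance, that right multiplication by $h\in\HH$ on $\HH$ has real trace $4\Rea h$. Your argument replaces this with the spectral theorem over $\D$, and needs an explicit trace only for the diagonal $B_0$. It also explains why the answer must be isotropic, independently of $G$.

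\textbf{A small slip.} The sentence ``absolute irreducibility holds because the image is all of $SO(d+1)$'' is false for $d=1$: $SO(2)$ acting on $\R^2$ has commutant $\C$. This is harmless, because your operator is symmetric and the image of $O(2)$ also contains reflections. Still, your fallback argument is cleaner and works uniformly: an invariant quadratic form is constant on the unit sphere, which forces a symmetric operator to be scalar. I would lead with that.
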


\begin{proof}
For $B\in E_{\D}$, define
$$
q_B(x):=\langle B,Z(x)\rangle_E.
$$
The corresponding real self-adjoint operator $A_B$ on $V$ is traceless.
Since $B$ acts through the two-dimensional $\D$-flavor factor of $V$ while
acting as the identity on the $q$-dimensional multiplicity space $W$, a
direct computation gives, for $B,C\in E_{\D}$,
$$
\tr_{\R}(A_BA_C)=q\,\Rea\tr(BC)
=q\langle B,C\rangle_E.
$$
The standard fourth-moment formula on the unit sphere in a real
$N$-dimensional Euclidean space gives
$$
\int (x^TAx)(x^TCx)\,d\sigma(x)
=
\frac{2\tr(AC)+\tr(A)\tr(C)}{N(N+2)}.
$$
Here $N=2q$ and both operators are traceless, so
$$
\int q_Bq_C\,d\sigma
=
\frac{\langle B,C\rangle_E}{2(q+1)}.
$$
This is the covariance identity.  The mean is zero by spherical symmetry.
\end{proof}

\begin{theorem}[Quadratic-defect completion]
\label{thm:completion}
Assume
$$
\Pfour(G,W)=0.
$$
Let
$$
\mu=\sum_{i=1}^s w_i\mu_{Gx_i},
\qquad
w_i>0,\qquad
\sum_iw_i=1,
$$
and put $Z_i=Z(x_i)$.  Then the following are equivalent:
\begin{enumerate}[label=\textup{(\roman*)}]
\item $\mu$ has the same averages as the sphere for every even polynomial of
degree at most $4$;
\item
$$
\sum_iw_iZ_i=0
$$
and
$$
\sum_iw_iZ_i\otimes Z_i
=
\frac{1}{2(q+1)}I_{E_{\D}}.
$$
\end{enumerate}
Consequently, whenever these equations hold, $\mu^{\pm}$ is a weighted
spherical $4$-design.  If $\mu$ already has vanishing cubic moments---for
example, if $\mu$ is antipodal or $\Harm_3(V)^G=0$---then $\mu$ itself is a
weighted spherical $4$-design.
\end{theorem}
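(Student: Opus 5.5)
The plan is to reduce condition (i) to testing $G$-invariant even polynomials of degree $0$, $2$ and $4$. Because $\mu$ is $G$-invariant, Reynolds averaging replaces any test polynomial by its $G$-average without changing either side. Degree $0$ is automatic, since $\sum_i w_i=1$.

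\emph{Degree two.} Each point of $Gx_i$ has Gram matrix $H(x_i)$, and by the proof of Proposition~\ref{prop:2design} every invariant quadratic is some $\gamma_B$. Thus
$$
\int\gamma_B\,d\mu=\Rea\tr\Bigl(B\sum_iw_iH(x_i)\Bigr),
\qquad
\int\gamma_B\,d\sigma=\frac{\tr B}{2}.
$$
Write $B=\frac{\tr B}{2}I_2+B_0$ with $B_0\in E_{\D}$. Since $\tr Z_i=0$, equality for all $B$ is exactly $\langle B_0,\sum_iw_iZ_i\rangle_E=0$ for all $B_0\in E_{\D}$, that is, $\sum_iw_iZ_i=0$.

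\emph{Degree four.} On $S(V)$ every even polynomial of degree at most $4$ agrees with a homogeneous quartic, obtained by multiplying lower terms by powers of $\|x\|^2$. So it suffices to test $\Sym^4(V^*)^G$. The hypothesis $\Pfour(G,W)=0$ says this space is spanned by products $\gamma_B\gamma_C$. Using the same decomposition and $\gamma_{I_2}=\|x\|^2$, which equals $1$ on the sphere, we get on $S(V)$
$$
\gamma_B=\frac{\tr B}{2}+\langle B_0,Z(x)\rangle_E.
$$
Hence
$$
\gamma_B\gamma_C=\frac{\tr B\,\tr C}{4}+\frac{\tr B}{2}\langle C_0,Z\rangle_E+\frac{\tr C}{2}\langle B_0,Z\rangle_E+\langle B_0\otimes C_0,Z\otimes Z\rangle .
$$
The $\mu$-average of this is expressed through $\sum_iw_iZ_i$ and $\sum_iw_iZ_i\otimes Z_i$, because $Z$ is constant on each orbit. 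The $\sigma$-average is expressed through the spherical mean $0$ and covariance $\frac{1}{2(q+1)}I_{E_{\D}}$ of Lemma~\ref{lem:cov}.

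This gives both directions. For (ii)$\Rightarrow$(i), substitute into these formulas. For (i)$\Rightarrow$(ii), take $B=I_2$ and $C=C_0$ to recover the vanishing mean, which also follows from the degree-two step. Then take $B,C$ traceless: this yields $\langle B_0\otimes C_0,\sum_iw_iZ_i\otimes Z_i-\frac{1}{2(q+1)}I\rangle=0$ for all $B_0,C_0\in E_{\D}$. Since the tensors $B_0\otimes C_0$ span $E_{\D}\otimes E_{\D}$, this forces the covariance identity.

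\emph{Consequences.} Antipodalization preserves even moments and kills odd ones, and the sphere's odd moments vanish. Therefore $\mu^{\pm}$ matches $\sigma$ in all degrees $\le4$, so it is a weighted spherical $4$-design.

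If $\mu$ already has vanishing cubic moments, it is enough to add two observations. First, degree-one moments of an orbit vanish because $V^G=0$. Second, the cubic moments vanish by assumption: either $\mu$ is antipodal, or $\Harm_3(V)^G=0$. In the latter case every invariant cubic lies in $\|x\|^2\Harm_1(V)^G=0$, so invariant cubics vanish, and the same then holds for all cubics by Reynolds averaging.

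The only delicate point I expect is bookkeeping: identifying the real quadratic form $\langle B_0,Z\rangle_E$ with the tensor pairing, consistently with the normalization used in Lemma~\ref{lem:cov}. Since that lemma is stated exactly for the same pairing, it should go through directly.
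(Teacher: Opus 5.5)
Your proposal is correct and follows the paper's own argument: Reynolds reduction to invariant polynomials, the sphere identity $\gamma_B=\frac{\tr B}{2}+\langle B_0,Z\rangle_E$, the spanning hypothesis $\Pfour(G,W)=0$, and Lemma~\ref{lem:cov} to match the terms quadratic in $Z$. You are more explicit than the paper about two points, but the route is the same: the direction (i)$\Rightarrow$(ii), where you test $\gamma_B\gamma_C$ with traceless $B,C$ and so do not even need $\Pfour(G,W)=0$, and the cubic case, via $\Sym^3(V^*)^G=\Harm_3(V)^G\oplus\|x\|^2\Harm_1(V)^G$.
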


\begin{proof}
Because $\mu$ is $G$-invariant, it is enough to test invariant polynomials.
There are no invariant linear forms because $W$ is nontrivial and irreducible.
The invariant harmonic quadratics are exactly
$$
q_B(x)=\langle B,Z(x)\rangle_E,
\qquad B\in E_{\D},
$$
so the degree-two condition is equivalent to $\sum_iw_iZ_i=0$.

Now restrict to the unit sphere.  There
$$
H(x)=\frac12I_2+Z(x).
$$
For $B\in\Herm_2(\D)$, write
$$
B_0:=B-\frac{\tr B}{2}I_2\in E_{\D}.
$$
Then every quadratic Gram coordinate has the form
$$
\gamma_B(x)
=\frac{\tr B}{2}+\langle B_0,Z(x)\rangle_E.
$$
Consequently every product $\gamma_B\gamma_C$, when restricted to the
sphere, is a sum of a constant term, a term linear in $Z$, and a term of the
form
$$
\langle B_0,Z\rangle_E\,\langle C_0,Z\rangle_E.
$$
Since $\Pfour(G,W)=0$, these products span all invariant quartics.  The
constant terms already have the correct average, the linear terms are
exactly the degree-two conditions just imposed, and the remaining quadratic
terms in $Z$ are matched for every $B_0,C_0\in E_{\D}$ precisely when
$$
\sum_iw_iZ_i\otimes Z_i
=
\frac{1}{2(q+1)}I_{E_{\D}},
$$
by Lemma~\ref{lem:cov}.  This proves the equivalence through degree four.

Antipodalization preserves the two even equations and kills all odd
polynomials.  The last assertion follows in the same way.
\end{proof}

\section{Optimal completions and projective defect designs}
\label{sec:optimal}

We now assume that the defect equations of
Theorem~\ref{thm:completion} hold and that at least one constituent orbit is
itself a spherical $2$-design.  By Proposition~\ref{prop:2design}, such an
orbit has defect zero.  Its Gram matrix is $I_2/2$ and therefore has rank
$2$, so necessarily $n\ge2$.

For any $x\in S(V)$, the two eigenvalues of $H(x)$ are nonnegative and sum to
one.

\begin{lemma}
\label{lem:maxdefect}
For every $x\in S(V)$,
$$
\|Z(x)\|^2\le\frac12.
$$
Equality holds if and only if
$$
\rank H(x)=1.
$$
\end{lemma}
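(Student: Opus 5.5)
The plan is to diagonalize the Hermitian Gram matrix and compute the norm of its traceless part directly from the eigenvalues. Fix $x\in S(V)$ and write $H=H(x)\in\Herm_2(\D)$. Over each of $\D=\R,\C,\HH$, a $2\times2$ Hermitian matrix is unitarily diagonalizable with real eigenvalues $\lambda_1,\lambda_2$; in the quaternionic case I would invoke the standard spectral theorem for quaternionic Hermitian matrices, whose right eigenvalues are real. The inner product $\langle B,C\rangle_E=\Rea\tr(BC)$ is invariant under conjugation by $\D$-unitary matrices, and $\Rea\tr$ is cyclic even over $\HH$. Hence $\|Z(x)\|^2$ can be computed after diagonalizing.

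As noted just before the lemma, $H\ge0$ and $\tr H=1$, so the eigenvalues satisfy $\lambda_1,\lambda_2\ge0$ and $\lambda_1+\lambda_2=1$. After diagonalization,
$$
Z(x)\sim\operatorname{diag}\bigl(\lambda_1-\tfrac12,\ \lambda_2-\tfrac12\bigr)=\operatorname{diag}(\delta,-\delta),
\qquad \delta=\lambda_1-\tfrac12.
$$
It follows that
$$
\|Z(x)\|^2=\tr(Z^2)=2\delta^2.
$$
Since $0\le\lambda_1\le1$, we have $|\delta|\le\tfrac12$, and therefore $\|Z(x)\|^2\le\tfrac12$.

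For the equality case, $\|Z(x)\|^2=\tfrac12$ holds exactly when $|\delta|=\tfrac12$, that is, when $\{\lambda_1,\lambda_2\}=\{0,1\}$. Given $\tr H=1$, this is equivalent to $H$ having exactly one nonzero eigenvalue, i.e.\ $\rank H(x)=1$. Note that $H\ne0$ because $\tr H=1$, so rank $0$ cannot occur.

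The only step needing care is the quaternionic diagonalization, together with the fact that $\|Z\|^2=\Rea\tr(Z^2)$ equals the sum of squared real eigenvalues. To avoid relying on quaternionic spectral theory, I would instead argue directly: write $Z=\begin{pmatrix}a&b\\ \bar b&-a\end{pmatrix}$ with $a\in\R$ and $b\in\D$. Then $\|Z\|^2=2(a^2+|b|^2)$. Moreover
$$
\det\nolimits_{\mathrm{Moore}}H=\tfrac14-a^2-|b|^2,
$$
which is nonnegative because $H\ge0$; in the $2\times2$ case this follows by testing $H$ against the vectors $(1,0)$, $(0,1)$ and suitable $(u,v)$. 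This gives the bound, with equality exactly when this determinant vanishes, i.e.\ when $H$ is singular, which in the $2\times2$ case means rank one.
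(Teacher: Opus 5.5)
Your proof is correct and follows the paper's argument: it writes the eigenvalues of $H(x)$ as $\lambda,1-\lambda$, obtains $\|Z(x)\|^2=2(\lambda-\tfrac12)^2\le\tfrac12$, and reads off equality exactly when $\lambda\in\{0,1\}$, i.e.\ $\rank H(x)=1$. Your supplementary coordinate check, $\|Z\|^2=2(a^2+|b|^2)$ with $\tfrac14-a^2-|b|^2\ge0$ from positivity, is also valid and makes explicit the quaternionic spectral step that the paper uses implicitly.
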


\begin{proof}
If the eigenvalues of $H(x)$ are $\lambda$ and $1-\lambda$, then
$$
\|Z(x)\|^2
=
2\left(\lambda-\frac12\right)^2
\le\frac12.
$$
Equality holds exactly when $\lambda\in\{0,1\}$.
\end{proof}

\begin{remark}[The defect body]
\label{rem:defectbody}
Because $n\ge2$, every positive semidefinite matrix
$H\in\Herm_2(\D)$ of trace one occurs as the Gram matrix of two vectors in
$W$.  Hence Lemma~\ref{lem:maxdefect} describes the entire image of the
defect map:
$$
Z(S(V))
=
\left\{Z\in E_{\D}:\|Z\|^2\le\frac12\right\}.
$$
Thus the defect body is a Euclidean ball, and its boundary consists exactly
of the rank-one Gram matrices.  The sharpness statement below can therefore
be read geometrically: minimum correction weight forces every nonzero defect
to the boundary of this ball.
\end{remark}

Let
$$
\alpha:=\sum_{Z_i\ne0}w_i
$$
be the total weight carried by correction orbits.

\begin{theorem}[Optimal correction bound]
\label{thm:bound}
Under the defect equations,
$$
\alpha\ge\frac{r_{\D}}{q+1}
=
\frac{d+1}{dn+1}.
$$
Equivalently, the total weight carried by $2$-design orbits is at most
$$
\frac{d(n-1)}{dn+1}.
$$
Equality holds if and only if every correction orbit has rank-one Gram matrix.
\end{theorem}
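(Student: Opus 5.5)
Take the trace of the covariance equation in Theorem~\ref{thm:completion}\textup{(ii)}, viewed as an endomorphism of $E_{\D}$. The identity $I_{E_{\D}}$ has trace $r_{\D}=\dim_{\R}E_{\D}$, and the trace of $Z_i\otimes Z_i$ is $\|Z_i\|^2$. The covariance equation therefore gives the scalar identity
$$
\sum_i w_i\|Z_i\|^2=\frac{r_{\D}}{2(q+1)}.
$$
Orbits with $Z_i=0$ contribute nothing, so the sum runs only over the correction orbits.

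Next bound each nonzero term using Lemma~\ref{lem:maxdefect}, which gives $\|Z_i\|^2\le\frac12$. This yields
$$
\frac{r_{\D}}{2(q+1)}=\sum_{Z_i\ne0}w_i\|Z_i\|^2\le\frac12\sum_{Z_i\ne0}w_i=\frac{\alpha}{2},
$$
so $\alpha\ge r_{\D}/(q+1)$. Substituting $r_{\D}=d+1$ and $q=dn$ gives $\alpha\ge(d+1)/(dn+1)$.

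For the equivalent form, note that the $2$-design orbits are exactly the orbits with $Z_i=0$, by Proposition~\ref{prop:2design}. Their total weight is therefore $1-\alpha$, and
$$
1-\frac{d+1}{dn+1}=\frac{d(n-1)}{dn+1}.
$$

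For the equality case, since every $w_i>0$, equality in the chain holds if and only if $\|Z_i\|^2=\frac12$ for every correction orbit. By the equality clause of Lemma~\ref{lem:maxdefect}, this is equivalent to $\rank H(x_i)=1$. Conversely, if every correction orbit has rank-one Gram matrix, each inequality is an equality and so $\alpha$ equals the bound. In that case the mean condition is not needed; only the trace of the covariance equation is used.

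There is no substantial obstacle here. The only care needed is confirming that the trace of the tensor identity is taken in the inner product $\langle\cdot,\cdot\rangle_E$, so that $\tr(Z\otimes Z)=\|Z\|^2$ matches the norm used in Lemma~\ref{lem:maxdefect}. It is also worth recording that $n\ge2$, as noted before Lemma~\ref{lem:maxdefect}, so the bound is nontrivial.
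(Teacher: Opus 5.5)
Your proposal is correct and matches the paper's proof: take the trace of the covariance equation to get $\sum_i w_i\|Z_i\|^2=r_{\D}/(2(q+1))$, bound each correction term by $\tfrac12$ using Lemma~\ref{lem:maxdefect}, and obtain the equality case from that lemma's rank-one clause. Your added remarks are accurate details the paper leaves implicit: only the trace of the covariance equation is needed, and the zero-defect orbits carry total weight $1-\alpha$ by Proposition~\ref{prop:2design}.
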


\begin{proof}
Taking traces in the covariance equation gives
$$
\sum_iw_i\|Z_i\|^2
=
\frac{r_{\D}}{2(q+1)}.
$$
By Lemma~\ref{lem:maxdefect},
$$
\frac{r_{\D}}{2(q+1)}
\le
\frac{\alpha}{2}.
$$
Equality holds precisely when every nonzero defect has norm squared $1/2$.
\end{proof}

We call a completion \emph{correction-weight optimal} if equality holds in
Theorem~\ref{thm:bound}.

A rank-one positive semidefinite Hermitian matrix of trace one has the form
$$
P_u=uu^*,
\qquad
u\in\D^2,\quad\|u\|=1,
$$
and depends only on the projective line $[u]\in\D\mathbb P^1$.  The map
$$
[u]\longmapsto
\sqrt2\left(P_u-\frac12I_2\right)
$$
identifies $\D\mathbb P^1$ with the unit sphere of $E_{\D}$.

\begin{theorem}[Projective-line equality theorem]
\label{thm:projective}
Assume equality in Theorem~\ref{thm:bound}.  For each correction orbit write
$$
v_i:=\sqrt2Z_i\in S(E_{\D}),
\qquad
\lambda_i:=\frac{w_i}{\alpha}.
$$
Then
$$
\sum_i\lambda_iv_i=0,
\qquad
\sum_i\lambda_iv_i\otimes v_i
=
\frac1{r_{\D}}I_{E_{\D}}.
$$
Thus the normalized defects form a weighted spherical $2$-design on
$$
S(E_{\D})\simeq\D\mathbb P^1.
$$
Conversely, fix any zero-defect orbit and any weighted spherical
$2$-design $\{(v_i,\lambda_i)\}$ on $\D\mathbb P^1\simeq S(E_{\D})$.
Choose rank-one correction representatives with $Z_i=v_i/\sqrt2$, give them
weights
$$
w_i=\alpha\lambda_i,
\qquad
\alpha=\frac{r_{\D}}{q+1},
$$
and place the remaining mass $1-\alpha$ on the zero-defect orbit.  Then the
defect equations hold with equality in Theorem~\ref{thm:bound}.  Hence,
after antipodalization if necessary, one obtains a correction-weight optimal
weighted spherical $4$-design.
\end{theorem}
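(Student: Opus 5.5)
The proof is a direct translation between the defect equations of Theorem~\ref{thm:completion}(ii) and the $2$-design equations on $S(E_{\D})$. The only inputs are the equality case of Theorem~\ref{thm:bound} and the surjectivity of the defect map recorded in Remark~\ref{rem:defectbody}.

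\emph{Forward direction.} Assume equality in Theorem~\ref{thm:bound}. Then every correction orbit has rank-one Gram matrix, so Lemma~\ref{lem:maxdefect} gives $\|Z_i\|^2=\tfrac12$, i.e.\ $v_i=\sqrt2Z_i$ is a unit vector of $E_{\D}$. In the sums $\sum_iw_iZ_i$ and $\sum_iw_iZ_i\otimes Z_i$ the zero-defect orbits contribute nothing, so both sums may be restricted to correction orbits. Substituting $Z_i=v_i/\sqrt2$ and $w_i=\alpha\lambda_i$, the mean equation becomes $\sum_i\lambda_iv_i=0$ after dividing by $\alpha/\sqrt2$. The covariance equation becomes
\[
\frac{\alpha}{2}\sum_i\lambda_iv_i\otimes v_i=\frac1{2(q+1)}I_{E_{\D}} .
\]
Inserting $\alpha=r_{\D}/(q+1)$ gives $\sum_i\lambda_iv_i\otimes v_i=\frac1{r_{\D}}I_{E_{\D}}$. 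The $\lambda_i$ are positive and sum to $1$ by the definition of $\alpha$. On the unit sphere of the $r_{\D}$-dimensional space $E_{\D}$, the spherical mean is $0$ and the second moment tensor is $\frac1{r_{\D}}I$. Moreover, the degree-two polynomials on a sphere are spanned by constants, linear forms and quadratic forms. Hence these two identities are exactly the weighted $2$-design conditions, and the identification of $S(E_{\D})$ with $\D\mathbb P^1$ is the one stated just before the theorem.

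\emph{Converse.} Given a weighted $2$-design $\{(v_i,\lambda_i)\}$ on $S(E_{\D})$, each $v_i/\sqrt2$ has norm squared $\tfrac12$. By Remark~\ref{rem:defectbody}, which uses $n\ge2$ (forced by the existence of a zero-defect orbit), there is some $x_i\in S(V)$ with $Z(x_i)=v_i/\sqrt2$, and its Gram matrix is rank one by Lemma~\ref{lem:maxdefect}. We also need $1-\alpha\ge0$ so that the measure is a probability measure. This holds because
\[
\alpha=\frac{d+1}{dn+1}\le1 \quad\text{whenever } n\ge1 .
\]
Then we run the same algebra backwards. The zero-defect orbit contributes nothing to either sum, so the two $2$-design identities rescale to the defect equations of Theorem~\ref{thm:completion}(ii). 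Equality in Theorem~\ref{thm:bound} holds by construction. Theorem~\ref{thm:completion} then shows that $\mu^{\pm}$ (or $\mu$ itself, if the cubic moments already vanish) is a weighted $4$-design.

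\emph{Obstacles.} No step is a real obstacle; the argument is bookkeeping. The points needing care are:
\begin{itemize}
\item correction orbits are exactly those with $Z_i\ne0$, so the $\lambda_i$ form a probability vector;
\item if the design on $S(E_{\D})$ has repeated points, this simply means several distinct orbits share the same defect, which is harmless;
\item the characterization of weighted $2$-designs on $S^{r_{\D}-1}$ by the mean and second-moment identities, which I would state briefly via the trace of the second-moment tensor.
\end{itemize}
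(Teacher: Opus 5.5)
Your proof is correct and is the substitution argument the paper leaves implicit (the theorem is stated there without a separate proof): put $Z_i=v_i/\sqrt2$, $w_i=\alpha\lambda_i$, $\alpha=r_{\D}/(q+1)$ into the defect equations of Theorem~\ref{thm:completion}(ii), using the rank-one equality case of Theorem~\ref{thm:bound} and the characterization of weighted $2$-designs on $S(E_{\D})$ by mean zero and second-moment tensor $\frac1{r_{\D}}I$. One small tightening: Theorem~\ref{thm:completion} requires strictly positive weights, so use $n\ge2$ to get $1-\alpha=d(n-1)/(dn+1)>0$, rather than only $\alpha\le1$.
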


\begin{corollary}[Minimum number at optimal correction weight]
A correction-weight optimal completion requires at least
$$
r_{\D}+1=d+2
$$
correction orbits.  If this minimum is attained, the correction weights are
necessarily equal and the normalized defects form a regular simplex.  Thus the
minimal correction configurations are
$$
\begin{array}{c|c|c}
\D&\D\mathbb P^1&\text{minimal correction design}\\
\hline
\R&S^1&\text{equilateral triangle}\\
\C&S^2&\text{regular tetrahedron}\\
\HH&S^4&\text{regular $5$-simplex}.
\end{array}
$$
\end{corollary}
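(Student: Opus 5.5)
By Theorem~\ref{thm:projective}, a correction-weight optimal completion yields a weighted spherical $2$-design $\{(v_i,\lambda_i)\}_{i=1}^k$ on the unit sphere $S(E_{\D})$ of the $r_{\D}$-dimensional Euclidean space $E_{\D}$. The plan is to prove the standard lower bound $k\ge r_{\D}+1$ for such designs and to characterize the equality case. The argument is linear algebra on the $k\times k$ Gram matrix.

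\textbf{Step 1: the lower bound.} Write $r=r_{\D}$ and $\Lambda=\mathrm{diag}(\lambda_i)$. Let $U$ be the $r\times k$ matrix with columns $\sqrt{\lambda_i}\,v_i$. The covariance identity gives $UU^{T}=\frac1r I_r$, so the rows of $\sqrt r\,U$ are orthonormal in $\R^k$. The mean condition $\sum_i\lambda_iv_i=0$ says that the vector $s=(\sqrt{\lambda_1},\dots,\sqrt{\lambda_k})$ satisfies $Us=0$. Since $\|s\|^2=\sum_i\lambda_i=1$, the vector $s$ is a unit vector orthogonal to the $r$ orthonormal rows. Hence $\R^k$ contains $r+1$ orthonormal vectors, so $k\ge r+1=d+2$.

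\textbf{Step 2: the equality case.} Suppose $k=r+1$. The $k\times k$ matrix $Q$ whose rows are those of $\sqrt r\,U$ together with $s$ is then orthogonal, so its columns are also orthonormal. Column $i$ is $(\sqrt{r\lambda_i}\,v_i,\sqrt{\lambda_i})$, and its squared norm is $r\lambda_i\|v_i\|^2+\lambda_i=(r+1)\lambda_i$. Setting this equal to $1$ gives $\lambda_i=1/(r+1)$, so the correction weights are equal. Orthogonality of columns $i\ne j$ gives
$$
r\sqrt{\lambda_i\lambda_j}\,\langle v_i,v_j\rangle+\sqrt{\lambda_i\lambda_j}=0,
$$
that is, $\langle v_i,v_j\rangle=-1/r$ for all $i\ne j$. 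This is exactly the Gram matrix of a regular simplex with $r+1$ vertices inscribed in $S^{r-1}$. Specializing to $r=2,3,5$ gives the triangle, the tetrahedron and the $5$-simplex in the table.

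\textbf{Step 3: existence.} I will also check that the minimum is attained, so that the table lists actual configurations. The vertices of a regular simplex with equal weights satisfy $\sum_i v_i=0$ and $\sum_i v_iv_i^{T}=\frac{r+1}{r}I_r$. These are precisely the $2$-design equations, and the converse half of Theorem~\ref{thm:projective} then realizes the configuration, since $n\ge2$ guarantees that rank-one representatives with any prescribed defect exist.

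The only step needing care is the bookkeeping in Step 2, namely recovering the weights and inner products from column orthonormality. It is routine once $Q$ is written down.
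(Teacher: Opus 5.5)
Your proposal is correct and follows the paper's argument: your orthogonal matrix $Q$ encodes exactly the paper's identity $U^TU=I_{r+1}-aa^T$ (with the paper's $U$ equal to your $\sqrt r\,U$), from which the equal weights $1/(r+1)$ and the inner products $-1/r$ are read off. The only cosmetic difference is the lower bound, where you count $r+1$ orthonormal vectors in $\R^k$ while the paper argues that exactly $r$ spanning defects would be independent yet satisfy the positive relation $\sum_i\lambda_iv_i=0$; both are the same linear algebra, and your Step~3 existence check is just the converse half of Theorem~\ref{thm:projective}.
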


\begin{proof}
Let $r=r_{\D}$.  The covariance equation implies that the correction defects
span $E_{\D}$, so there are at least $r$ of them.  There cannot be exactly
$r$: in that case the spanning defects would be linearly independent, while
the first-moment equation
$$
\sum_i\lambda_i v_i=0
$$
is a nontrivial linear relation because every $\lambda_i$ is positive.
Hence at least $r+1$ correction orbits are required.

Suppose now that this minimum is attained.  Set
$u_i=\sqrt{r\lambda_i}\,v_i$ and let $U$ be the $r\times(r+1)$ matrix with
columns $u_i$.  The second-moment equation gives $UU^T=I_r$, while the
first-moment equation gives $U(\sqrt{\lambda_i})_i=0$.  Since $U$ has rank
$r$, its Gram matrix is
$$
U^TU=I_{r+1}-aa^T,
\qquad a=(\sqrt{\lambda_i})_i.
$$
Comparing diagonal entries gives $r\lambda_i=1-\lambda_i$, so
$\lambda_i=1/(r+1)$ for every $i$.  The off-diagonal entries then give the
regular-simplex inner products.
\end{proof}

For the minimal simplex, the base and correction weights are
$$
w_0=\frac{d(n-1)}{dn+1},
\qquad
w_c=\frac{d+1}{(dn+1)(d+2)}.
$$

The Clifford family gives a uniform illustration in growing dimension.

\begin{corollary}[Clifford completions in unbounded dimension]
\label{cor:cliffordcompletion}
Let $r\ge1$, let
$$
G_r=\widetilde{\mathrm{Cliff}}_r,
\qquad
W_r=(\C^{2^r})_{\R},
\qquad
V_r=W_r\oplus W_r,
$$
and choose two orthonormal vectors $e_1,e_2\in\C^{2^r}$.  Put
$$
M_0=\frac1{\sqrt2}[e_1\ e_2].
$$
Let $[u_1],\ldots,[u_4]\in\C\mathbb P^1$ be the vertices of a regular
tetrahedron under the identification
$\C\mathbb P^1\simeq S(E_{\C})\simeq S^2$.  For any unit vector
$w\in W_r$, set
$$
C_j=w\,u_j^*,
\qquad 1\le j\le4.
$$
Then the weighted $G_r$-invariant measure
$$
\mu_r=
\frac{2(2^r-1)}{2^{r+1}+1}\,\mu_{G_rM_0}
+
\frac{3}{4(2^{r+1}+1)}
\sum_{j=1}^4\mu_{G_rC_j}
$$
is a weighted spherical $5$-design attaining equality in
Theorem~\ref{thm:bound} on
$$
S(V_r)=S^{2^{r+2}-1}.
$$
In particular, the ambient dimension tends to infinity with $r$, while the
correction-weight optimal completion with the minimum number of correction
orbits is governed by the same regular tetrahedron
in the fixed three-dimensional defect space $E_{\C}$.
\end{corollary}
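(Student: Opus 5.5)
The plan is to reduce everything to the converse half of Theorem~\ref{thm:projective}, and then upgrade from strength $4$ to strength $5$ using a central element of $G_r$. I would carry out the following steps in order.

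\textbf{Step 1: the hypothesis.} By Corollary~\ref{cor:cliffordfamily}, $\End_{G_r}(W_r)=\C$ and $\Pfour(G_r,W_r)=0$. Hence $d=2$, $n=2^r$, $q=2^{r+1}$ and $r_{\C}=3$, and Theorems~\ref{thm:completion}, \ref{thm:bound} and \ref{thm:projective} all apply.

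\textbf{Step 2: defects of the chosen representatives.}
\begin{itemize}
\item For the base point, $M_0^*M_0=\frac12[\langle e_i,e_j\rangle]=\frac12I_2$. So $Z(M_0)=0$, and by Proposition~\ref{prop:2design} the orbit $G_rM_0$ is a $2$-design orbit.
\item For $C_j=w\,u_j^*$ with $\|w\|=\|u_j\|=1$, we have $\|C_j\|^2=\tr(C_j^*C_j)=1$, so $C_j\in S(V_r)$.
\item Its Gram matrix is $C_j^*C_j=u_jw^*wu_j^*=u_ju_j^*=P_{u_j}$, which is rank one. Here I would briefly check that this matrix description agrees with the right-module Hermitian convention; for $\D=\C$ this is harmless.
\item Consequently $\sqrt2\,Z(C_j)=\sqrt2(P_{u_j}-\frac12I_2)$ is exactly the image of $[u_j]$ under the stated identification $\C\mathbb P^1\simeq S(E_{\C})$. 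These four images are the vertices $v_j$ of a regular tetrahedron.
\end{itemize}

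\textbf{Step 3: the tetrahedron is a $2$-design.} With equal weights $\frac14$, the vertices of a regular tetrahedron on $S^2$ form a spherical $2$-design. Indeed, $\sum v_j=0$, and the inner products $-\frac13$ give $\sum_j\frac14 v_j\otimes v_j=\frac13I$. This follows from the computation $U^TU=I_4-aa^T$ in the preceding corollary read in reverse, or directly.

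\textbf{Step 4: matching the weights.} The converse part of Theorem~\ref{thm:projective} requires
$$\alpha=\frac{r_{\C}}{q+1}=\frac{3}{2^{r+1}+1},\qquad w_j=\frac{\alpha}{4}=\frac{3}{4(2^{r+1}+1)},$$
and base weight
$$1-\alpha=\frac{2^{r+1}-2}{2^{r+1}+1}=\frac{2(2^r-1)}{2^{r+1}+1}.$$
These are exactly the weights appearing in $\mu_r$. Hence the defect equations hold, with equality in Theorem~\ref{thm:bound}. Theorem~\ref{thm:completion} then gives the correct averages for all even polynomials of degree $\le4$.

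\textbf{Step 5: odd moments, the main point beyond the cited theorems.} Since $e^{\pi i/4}I\in G_r$, its fourth power $-I$ also lies in $G_r$. Every $G_r$-orbit is therefore antipodal, and so $\mu_r=\mu_r^{\pm}$. Consequently every odd-degree polynomial, of degree $1$, $3$ or $5$, integrates to zero against $\mu_r$, as it does against $\sigma$. Combined with Step 4, this shows that all moments of degree $\le5$ agree with those of the sphere, so $\mu_r$ is a weighted spherical $5$-design.

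The dimension statement follows from $\dim_{\R}V_r=2^{r+2}$. The closing remark is then immediate: the correction data is the same tetrahedron in $E_{\C}$ for every $r$.

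The main obstacle is minor: confirming that $C_j^*C_j=P_{u_j}$ under the paper's conventions, and noticing that the degree-$5$ claim rests entirely on $-I\in G_r$.
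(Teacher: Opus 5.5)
Your proposal is correct and follows the paper's own proof essentially step for step. You invoke Corollary~\ref{cor:cliffordfamily}, check the zero base defect and the rank-one correction Gram matrices $u_ju_j^*$, match the weights to the converse half of Theorem~\ref{thm:projective}, and get strength $5$ from $-I\in G_r$. The only differences are cosmetic: you obtain $-I$ as $(e^{\pi i/4}I)^4$ rather than as $(iI)^2\in\mathcal P_r$, and you write out the weight arithmetic and the tetrahedral frame identity that the paper leaves implicit.
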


\begin{proof}
By Corollary~\ref{cor:cliffordfamily}, the primitive quartic obstruction
vanishes and $\D=\C$.  Since $M_0^*M_0=I_2/2$, the base orbit has zero
defect.  Each $C_j$ has rank-one Gram matrix $u_ju_j^*$, and the four
normalized defects form a regular tetrahedron, hence an equal-weight
spherical $2$-design on $S(E_{\C})$.  The displayed coefficients are the
minimal-simplex weights with $d=2$ and $n=2^r$.  Theorems
\ref{thm:completion} and~\ref{thm:projective} therefore give a correction-weight optimal weighted spherical $4$-design.  Finally,
$-I=(iI)^2\in\mathcal P_r\subseteq G_r$, so every constituent orbit is
antipodal and all odd moments vanish; the weighted design therefore has
strength $5$.
\end{proof}

\section{Three model examples}
\label{sec:examples}

We now give one model example in each Frobenius--Schur type.  The table
records exactly the parameters used by the defect criterion: the real
ambient dimension $q$, the defect dimension $r_{\D}$, and the cubic and
quartic invariant multiplicities.  The real-type example is unweighted,
while the complex- and quaternionic-type examples are weighted.

$$
\begin{array}{c|c|c|c|c|c}
\D&G&q&r_{\D}&m_3(W)&m_4(W)\\
\hline
\R&A_5&3&2&0&6\\
\C&\widetilde{\mathrm{Cliff}}_1&4&3&0&10\\
\HH&H_{720}\cong2.A_6&8&5&0&21.
\end{array}
$$

\subsection{Real type: the rotational icosahedral group}

Let $G=A_5$ act on its natural $3$-dimensional real rotation module
$W=\R^3$.  Here
$$
\D=\R,\qquad n=3,\qquad q=3,\qquad r_{\D}=2.
$$
The character criterion gives $m_4(W)=6$, so there is no primitive quartic
obstruction; moreover $\Harm_3(W\oplus W)^G=0$.

Choose an orthonormal frame $(a,b)$ with trivial stabilizer in $A_5$ and set
$$
M_0=\frac1{\sqrt2}[a\ b].
$$
Then $GM_0$ is a spherical $2$-design with $60$ points.  Such frames exist
generically because $A_5$ is finite.

Choose a unit vector $w$ on a $3$-fold rotation axis, so $|Gw|=20$.  Let
$$
u_j=
\begin{pmatrix}
\cos(j\pi/3)\\
\sin(j\pi/3)
\end{pmatrix},
\qquad j=0,1,2,
$$
and put
$$
C_j=w\,u_j^T.
$$
The three normalized defects form an equilateral triangle in
$\R\mathbb P^1\simeq S^1$.  Their Gram matrices $u_ju_j^T$ are distinct, so
the three correction orbits are distinct; they are also disjoint from the
rank-two base orbit.  Since
$$
\frac{60}{60+3\cdot20}=\frac12
=
\frac{n-1}{n+1},
$$
the ordinary union
$$
GM_0\cup GC_0\cup GC_1\cup GC_2
$$
is an unweighted $120$-point spherical $4$-design in $S^5$.

\subsection{Complex type: the one-qubit Clifford group}

This is the case $r=1$ of Corollary~\ref{cor:cliffordcompletion}.  Thus
$$
G=\widetilde{\mathrm{Cliff}}_1=\langle \mathsf H,S\rangle\subset U(2)
$$
is the standard finite one-qubit Clifford group of order $192$, with the
matrices $\mathsf H$ and $S$ defined in Section~\ref{sec:primitive}.  For its
realification $W$,
$$
\D=\C,\qquad n=2,\qquad q=4,\qquad r_{\D}=3.
$$
We record the construction explicitly because it displays the tetrahedral
defect geometry in the smallest complex-type case.

Take
$$
M_0=\frac1{\sqrt2}I_2.
$$
Then $GM_0$ is a spherical $2$-design.

For the four correction directions, take the tetrahedral lines in $\C^2$
represented by
$$
u_1=
\begin{pmatrix}1\\0\end{pmatrix},
\qquad
u_2=
\begin{pmatrix}
1/\sqrt3\\
\sqrt{2/3}
\end{pmatrix},
$$
$$
u_3=
\begin{pmatrix}
1/\sqrt3\\
\sqrt{2/3}\,e^{2\pi i/3}
\end{pmatrix},
\qquad
u_4=
\begin{pmatrix}
1/\sqrt3\\
\sqrt{2/3}\,e^{4\pi i/3}
\end{pmatrix}.
$$
They satisfy
$$
|\langle u_i,u_j\rangle|^2=\frac13
\qquad(i\ne j),
$$
so their centered projectors form a regular tetrahedron in
$\C\mathbb P^1\simeq S^2$.  For any unit vector $w\in W$, put
$$
C_j=w\,u_j^*.
$$
The weighted union
$$
\frac25\,\mu_{GM_0}
+
\frac{3}{20}\sum_{j=1}^4\mu_{GC_j}
$$
is a weighted spherical $4$-design in $S^7$.  As in
Corollary~\ref{cor:cliffordcompletion} ($-I=(iI)^2\in G$), the measure is
antipodal and hence is automatically a weighted spherical $5$-design.

\subsection{Quaternionic type: the primitive group \texorpdfstring{$2.A_6$}{2.A6}}

Let
$$
G=H_{720}\cong2.A_6
$$
be the primitive quaternionic reflection group of order $720$ acting
irreducibly on $W=\HH^2$.  This realization and its six distinguished
equiangular quaternionic lines are described in \cite{Waldron2024}.  Here
$$
\D=\HH,\qquad n=2,\qquad q=8,\qquad r_{\D}=5.
$$
Let $\chi_8$ denote the faithful degree-$4$ complex character of $2.A_6$;
it has Frobenius--Schur indicator $-1$ and Schur index $2$
\cite{Atlas1985}.  Since $W$ is the associated $8$-dimensional real module,
its real character is $2\chi_8$.
Substituting this character into Proposition~\ref{prop:character} and
\eqref{eq:cubic-character} gives
$$
m_4(W)=21,
\qquad
m_3(W)=0.
$$
Thus there is no primitive quartic obstruction and no cubic obstruction.

Take
$$
M_0=\frac1{\sqrt2}I_2.
$$
Then $GM_0$ is a spherical $2$-design.

Let
$$
[u_1],\ldots,[u_6]\in\HH\mathbb P^1
$$
be the six equiangular quaternionic lines.  They satisfy
$$
|\langle u_i,u_j\rangle|^2=\frac25
\qquad(i\ne j).
$$
Consequently
$$
Z_j=u_ju_j^*-\frac12I_2
$$
satisfy
$$
\|Z_j\|^2=\frac12,
\qquad
\langle Z_i,Z_j\rangle=-\frac1{10}
\quad(i\ne j).
$$
Thus the normalized defects $\sqrt2Z_j$ form a regular $5$-simplex in
$\HH\mathbb P^1\simeq S^4$.

For any unit $w\in W$, put
$$
C_j=w\,u_j^*.
$$
The weighted union
$$
\frac49\,\mu_{GM_0}
+
\frac{5}{54}\sum_{j=1}^6\mu_{GC_j}
$$
is a weighted spherical $4$-design in $S^{15}$.  The central involution of
the non-split extension $2.A_6$ acts as a scalar quaternion on the faithful
irreducible $W$ (Schur's lemma).  Since it has order two, that scalar is
$\pm1$, and faithfulness excludes $+1$.  Thus $-I\in G$, so the measure is
automatically a weighted spherical $5$-design.

\section{Unweighted completions}
\label{sec:unweighted}

Suppose $GM_0$ is a $2$-design orbit of size $N_0$.  Then $n\ge2$, as noted
in Section~\ref{sec:optimal}.  Suppose further that $k$ rank-one correction
orbits have a common size $N_1$ and equal defect weights.  If the
normalized defects form an equal-weight spherical $2$-design on
$S(E_{\D})$, then the union solves the even moment equations precisely when
$$
\frac{kN_1}{N_0+kN_1}
=
\frac{d+1}{dn+1}.
$$
Equivalently,
$$
k=
\frac{(d+1)N_0}{d(n-1)N_1},
$$
and in that case
$$
N_0+kN_1
=
\frac{dn+1}{d(n-1)}N_0.
$$

The common-size and equal-weight assumptions are used only for the preceding
formula.  Independently of them, Theorem~\ref{thm:bound} immediately gives
the following general bound.

\begin{corollary}
Any unweighted completion satisfying the even fourth-moment equations and
containing a prescribed $2$-design orbit of size $N_0$ satisfies
$$
|X|\ge
\frac{dn+1}{d(n-1)}N_0.
$$
If equality holds, every correction orbit has rank-one Gram matrix.
\end{corollary}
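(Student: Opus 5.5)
The plan is to read the corollary as the unweighted specialization of Theorem~\ref{thm:bound}. Write the completion as $X=GM_0\cup X_1\cup\cdots$, a disjoint union of $G$-orbits with $X_0=GM_0$. Its uniform measure is then
$$
\mu_X=\sum_i\frac{|X_i|}{|X|}\,\mu_{X_i},
$$
so each orbit weight is $w_i=|X_i|/|X|$. The hypothesis that $X$ satisfies the even fourth-moment equations is condition~(i) of Theorem~\ref{thm:completion}. The prescribed orbit is a $2$-design, so by Proposition~\ref{prop:2design} it has zero defect, and Section~\ref{sec:optimal} shows this forces $n\ge2$. That is exactly the setting in which Theorem~\ref{thm:bound} applies.

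The key step is to bound the base orbit's share. By Theorem~\ref{thm:bound}, the total weight on zero-defect orbits is at most $d(n-1)/(dn+1)$. The base orbit is one of these zero-defect orbits, so
$$
\frac{N_0}{|X|}\le\sum_{Z_i=0}w_i\le\frac{d(n-1)}{dn+1}.
$$
Since $n\ge2$, the right side is positive, and we may rearrange to get
$$
|X|\ge\frac{dn+1}{d(n-1)}N_0.
$$

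For the equality case, suppose $|X|=\frac{dn+1}{d(n-1)}N_0$. Then both inequalities in the display are equalities. The first forces the base orbit to be the only zero-defect orbit. The second says the bound of Theorem~\ref{thm:bound} is attained, and its equality clause then says every correction orbit (every $Z_i\ne0$) has rank-one Gram matrix.

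One subtlety needs care: the orbits in the union must be distinct, so that the weights really are $|X_i|/|X|$. For a set $X$ this is automatic, since distinct orbits are disjoint. Another possibility is that the theorem is read as allowing the base orbit to sit alongside other zero-defect orbits; the argument above handles that case, because the bound on the total zero-defect weight bounds each individual one. I do not expect any step to be a real obstacle; the only point needing attention is the equality case, where both inequalities must be tight.
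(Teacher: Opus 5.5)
Your proposal is correct and is essentially the paper's argument: with orbit weights $w_i=|X_i|/|X|$, Theorem~\ref{thm:bound} bounds $N_0/|X|$ by the total zero-defect weight $d(n-1)/(dn+1)$, and the equality clause of that theorem yields the rank-one conclusion. Your equality-case bookkeeping (both inequalities tight, so the base orbit is the only zero-defect orbit) is right, and passing from the even moment equations to the defect equations only uses the direction (i)$\Rightarrow$(ii) of Theorem~\ref{thm:completion}, which does not actually require $\Pfour(G,W)=0$.
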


If the resulting configuration has vanishing cubic moments, it is itself a
spherical $4$-design; otherwise its antipodalization is one.

\section{A sharp \texorpdfstring{$E_6$}{E6} completion}
\label{sec:E6}

Let $G=W(E_6)$ act on its $6$-dimensional real reflection representation
$W$, and put $V=W\oplus W$.  Applying the character criterion to the
standard character data of $W(E_6)$ \cite{GeckPfeiffer2000} gives
\[
m_4(W)=6,
\]
so there is no primitive quartic obstruction.  Equation~\eqref{eq:cubic-character} also gives
\[
m_3(W)=0,
\]
equivalently $\Harm_3(V)^G=0$.
Consequently, any solution of the defect equations is already a spherical
$4$-design, without antipodalization.

We use the following explicit simple-root coordinates.  Number the simple
roots so that the Cartan matrix is
\[
C=
\begin{pmatrix}
2&0&-1&0&0&0\\
0&2&0&-1&0&0\\
-1&0&2&-1&0&0\\
0&-1&-1&2&-1&0\\
0&0&0&-1&2&-1\\
0&0&0&0&-1&2
\end{pmatrix}.
\]
The simple roots are normalized to have squared length $2$, so the Euclidean
inner product in these coordinates is
\[
(v,w)=v^TCw.
\]
In these coordinates, the simple reflection $s_i$ acts on a vector $v$ by
\[
s_i(v)=v-(Cv)_ie_i,
\]
where $e_i$ is the $i$th coordinate vector; in particular, $s_i$ fixes $v$
exactly when $(Cv)_i=0$.  Set
\[
a_0=(2,3,4,6,5,4)^T,
\qquad
b_0=(2,1,2,2,1,0)^T.
\]
Then
\[
Ca_0=(0,0,0,0,0,3)^T,
\qquad
Cb_0=(2,0,0,0,0,-1)^T,
\]
and
\[
(a_0,a_0)=12,\qquad
(b_0,b_0)=4,\qquad
(a_0,b_0)=0.
\]
Thus
\[
a=\frac{a_0}{\sqrt{12}},
\qquad
b=\frac{b_0}{2}
\]
are orthonormal.  Moreover, $Ca_0$ has zero entries exactly on nodes
$1,\ldots,5$ and a positive sixth entry.  Hence $a$ lies in the relative
interior of the corresponding face of the closed fundamental chamber.  By
the standard face-stabilizer description for finite reflection groups
\cite{Humphreys1990}, its full stabilizer is the $D_5$ parabolic
$$
\operatorname{Stab}_G(a)=\langle s_1,\ldots,s_5\rangle\cong W(D_5).
$$
Within this $D_5$ subgroup, the simple-root pairings of $b_0$ on nodes
$1,\ldots,5$ are $(2,0,0,0,0)$.  Thus the full stabilizer of the ordered
pair $(a,b)$ is
$$
\operatorname{Stab}_G(a,b)=\langle s_2,s_3,s_4,s_5\rangle\cong W(D_4).
$$

Set
\[
M_0=\frac1{\sqrt2}[a\ b].
\]
Then
\[
M_0^TM_0=\frac12I_2,
\]
so $GM_0$ is a spherical $2$-design.  By the stabilizer computation above
(and standard parabolic stabilizer theory \cite{Humphreys1990}),
\[
|GM_0|
=
\frac{|W(E_6)|}{|W(D_4)|}
=
\frac{51840}{192}
=
270.
\]

For the correction orbits, put
\[
u_j=
\begin{pmatrix}
\cos(j\pi/4)\\
\sin(j\pi/4)
\end{pmatrix},
\qquad j=0,1,2,3,
\]
and
\[
C_j=a\,u_j^T.
\]
Since $u_j\ne0$, the stabilizer of $C_j=a\,u_j^T$ is exactly the stabilizer
of $a$, namely $W(D_5)$.  Therefore
\[
|GC_j|=|Ga|
=
\frac{51840}{1920}
=
27.
\]
The orbit $Ga\subset S(W)=S^5$ is the classical $27$-point Schläfli tight
spherical $4$-design associated with $E_6$ \cite{HardinSloane1992}.  Hence
each $GC_j$ is an isometric copy of this configuration inside the
six-dimensional subspace $W\otimes u_j\subset V$.  In the ambient sphere
$S^{11}$, however, a rank-one correction orbit is not even a spherical
$2$-design; its role here is instead measured by its nonzero defect.

The four Gram matrices $C_j^TC_j=u_ju_j^T$ are distinct, so the four
correction orbits are distinct; they are also disjoint from the rank-two base
orbit.  Their normalized defects form a square in
$\R\mathbb P^1\simeq S^1$.

\begin{theorem}
\label{thm:E6construction}
The set
\[
X=
GM_0\sqcup GC_0\sqcup GC_1\sqcup GC_2\sqcup GC_3
\subset S^{11}
\]
is an unweighted spherical $4$-design with
\[
|X|=270+4\cdot27=378.
\]
\end{theorem}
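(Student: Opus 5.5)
The plan is to apply Theorem~\ref{thm:completion} directly to the uniform measure on $X$, written as a weighted union of its five constituent orbits. Here $\D=\R$, $n=6$, $q=6$ and $r_{\D}=2$. We know $m_4(W)=6$, which by Proposition~\ref{prop:character} (Lemma~\ref{lem:gramind} applies since $n\ge2$) gives $\Pfour(G,W)=0$. We also know $m_3(W)=0$, so $\Harm_3(V)^G=0$. By the last clause of Theorem~\ref{thm:completion}, it therefore suffices to verify the two defect equations for
$$
\mu_X=\frac{270}{378}\mu_{GM_0}+\sum_{j=0}^3\frac{27}{378}\mu_{GC_j}.
$$
For this to be legitimate, the five orbits must be pairwise disjoint, so that the uniform measure on $X$ really equals this combination. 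That follows because the Gram matrices $I_2/2$ and $u_ju_j^T$ are pairwise distinct and the Gram matrix is constant on orbits.

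Next compute the defects. The base orbit has $Z_0=0$. For the correction orbits, $C_j^TC_j=u_ju_j^T$ since $a$ is a unit vector, so
$$
Z_j=u_ju_j^T-\tfrac12I_2=\tfrac12\begin{pmatrix}\cos(j\pi/2)&\sin(j\pi/2)\\ \sin(j\pi/2)&-\cos(j\pi/2)\end{pmatrix}.
$$
Identify $E_{\R}\cong\R^2$ via $Z\mapsto(Z_{11}-Z_{22},2Z_{12})/\sqrt2$, an isometry for $\langle\cdot,\cdot\rangle_E$. Under it, $\sqrt2Z_j$ corresponds to the unit vector at angle $j\pi/2$, so the four normalized defects are the vertices of a square.

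The mean equation is immediate, since the four square vertices sum to zero. For the covariance, recall that the square is a $2$-design on $S^1$: $\frac14\sum_j v_j\otimes v_j=\frac12 I$. Hence
$$
\sum_j\frac{27}{378}Z_j\otimes Z_j=\frac{27}{378}\cdot\frac12\cdot 4\cdot\frac12 I=\frac{27}{378}I=\frac1{14}I.
$$
This matches $\frac{1}{2(q+1)}=\frac1{14}$. Equivalently, the correction weight $\alpha=108/378=2/7$ equals $r_{\D}/(q+1)$, which is the equality case of Theorem~\ref{thm:projective}.

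The main potential obstacle is not the algebra but the two inputs $m_4(W)=6$ and $m_3(W)=0$, which come from character data. If they needed independent justification, I would argue as follows. The invariants of $W(E_6)$ on $W$ have degrees $2,5,6,8,9,12$, so $W$ has no invariant cubic or quartic beyond $\|x\|^2$ and $\|x\|^4$. Polarization then shows that the invariant quartics on $W\oplus W$ are generated by the Gram entries. The argument is that the bigraded pieces of $\Sym^4(V^*)^G$ of bidegree $(4,0)$ and $(3,1)$ come from polarizations of invariant quartics on $W$, and the $(2,2)$ piece is controlled by $\Sym^2(W)^G$-type data, using that $\Sym^2W$ decomposes as the trivial module plus one irreducible. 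I expect this $(2,2)$ check to be the delicate point. I would simply cite the character computation as the paper does.
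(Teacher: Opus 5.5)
Your proposal is correct and follows the paper's proof: $\Pfour(G,W)=0$ and $\Harm_3(V)^G=0$ reduce everything to the defect equations, the base orbit has zero defect, and the four rank-one correction defects form a square carrying total weight $108/378=2/7=r_{\D}/(q+1)$. The only difference is that you check the mean and covariance equations by hand, obtaining $\sum_j\frac1{14}Z_j\otimes Z_j=\frac1{14}I$, whereas the paper invokes the converse half of Theorem~\ref{thm:projective}, which packages exactly this computation.
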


\begin{proof}
The natural orbit weights are
\[
\frac{270}{378}=\frac57,
\qquad
\frac{27}{378}=\frac1{14}.
\]
The base orbit has defect zero.  The four correction defects have maximal
norm and, after normalization, form the equal-weight spherical $2$-design
given by a square on $S^1$.  Theorem~\ref{thm:projective} therefore gives the
required second and fourth moments.  Since $\Harm_3(V)^G=0$,
Theorem~\ref{thm:completion} shows that $X$ itself is a spherical $4$-design.
\end{proof}

We next show that the size in Theorem~\ref{thm:E6construction} is sharp
within the completion class considered in this paper.

\begin{theorem}[Sharpness and equality structure for $E_6$ completions]
\label{thm:E6optimal}
Let $Y\subset S^{11}$ be an unweighted $W(E_6)$-invariant spherical
$4$-design containing a $W(E_6)$-orbit which is itself a spherical
$2$-design.  Then
\[
|Y|\ge378.
\]
If equality holds, then $Y$ consists of exactly one $270$-point $2$-design
orbit and four $27$-point rank-one correction orbits.  The four normalized
quadratic defects form a square in $S^1$.
\end{theorem}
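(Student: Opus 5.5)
The plan is to combine the weight bound of Theorem~\ref{thm:bound} with a lower bound on the size of a $2$-design orbit, and then to analyze the equality case through the possible sizes of rank-one orbits. Write $Y$ as a disjoint union of $G$-orbits, $G=W(E_6)$, and give each orbit $O$ its natural weight $|O|/|Y|$. Since $Y$ is a spherical $4$-design, Theorem~\ref{thm:completion} shows that the defect equations hold. Here $\D=\R$, $d=1$, $n=6$, $q=6$, so Theorem~\ref{thm:bound} says that the zero-defect orbits carry total weight at most $\frac{d(n-1)}{dn+1}=\frac57$. Let $N_{\mathrm{base}}$ be the number of points of $Y$ lying in zero-defect orbits. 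Then
\[
N_{\mathrm{base}}\le \tfrac57|Y|.
\]
By hypothesis $Y$ contains at least one $2$-design orbit, so $|Y|\ge\frac75 N_{\mathrm{base}}\ge\frac75 N_0$, where $N_0$ is the size of a smallest $2$-design orbit.

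The first real step is to show $N_0\ge270$. A $2$-design orbit is represented by $M=\frac1{\sqrt2}[a\ b]$ with $(a,b)$ orthonormal, by Proposition~\ref{prop:2design}. Its stabilizer is the pointwise stabilizer of the plane $\langle a,b\rangle$. By Steinberg's theorem on finite reflection groups \cite{Humphreys1990}, this stabilizer is a conjugate of a parabolic subgroup of rank at most $4$, since it fixes a $2$-dimensional subspace of the $6$-dimensional $W$.

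\begin{itemize}
\item I then list the rank-$\le4$ parabolic subgroups of $E_6$ by their Dynkin subdiagrams: $D_4$ of order $192$, $A_4$ of order $120$, $A_3A_1$, $A_2A_2$, $A_2A_1A_1$, and smaller ones.
\item The largest order is $192$, so $N_0\ge 51840/192=270$, and hence $|Y|\ge378$.
\end{itemize}

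This enumeration of parabolic orders is routine but is the step I would check most carefully.

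For the equality case, $|Y|=378$ forces equality in every inequality above. So $N_{\mathrm{base}}=270$, which means there is exactly one zero-defect orbit, with stabilizer conjugate to $W(D_4)$. The correction weight equals $\frac27$, so the correction orbits total $108$ points, and by Theorem~\ref{thm:bound} each of them is rank-one.

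A rank-one point has the form $C=w\,u^T$ with $w\in S(W)$ and $u$ a unit vector, and its stabilizer is $\operatorname{Stab}_G(w)$. That stabilizer is a parabolic of rank at most $5$, so $|GC|=|G|/|P|$ for such a parabolic $P$.

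\begin{itemize}
\item The possible sizes are $27$ (for $D_5$), $72$ (for $A_5$), $216$ (for $A_4A_1$), and larger values.
\item The orbit sizes must sum to $108$.
\item By the minimum-number corollary with $r_{\D}=2$, there must be at least $3$ correction orbits.
\end{itemize}

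The only partition of $108$ into these sizes with at least three parts is $27+27+27+27$. The other candidates, $72+27+\cdots$ and $36$-type sizes, either fail to sum to $108$ or do not occur as orbit sizes. The main obstacle is making this orbit-size list airtight.

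Finally, the four correction orbits have equal sizes, so they carry equal weights $\lambda_i=\frac14$. By Theorem~\ref{thm:projective}, the normalized defects $v_i\in S^1$ then form an equal-weight $2$-design. Identify $S^1$ with the unit complex numbers, writing $v_j=e^{i\theta_j}$. The $2$-design conditions become
\[
\sum_j e^{i\theta_j}=0,\qquad \sum_j e^{2i\theta_j}=0.
\]
Four unit complex numbers with sum zero form two antipodal pairs, with angles $\theta,\theta+\pi,\varphi,\varphi+\pi$. The second condition then reads $2e^{2i\theta}+2e^{2i\varphi}=0$, which forces $\varphi=\theta\pm\pi/2$. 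Hence the four normalized defects form a square.
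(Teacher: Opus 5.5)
Your proposal is correct and follows essentially the same route as the paper. It combines the parabolic bound $|GM|\ge 51840/192=270$ for a $2$-design orbit with the $\frac57$ weight bound from Theorem~\ref{thm:bound}, notes that the only rank-one orbit sizes not exceeding $108$ are $27$ ($D_5$) and $72$ ($A_5$), and closes with the square argument on $S^1$.

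Two minor remarks. First, the appeal to the minimum-number corollary is redundant, since $27r+72s=108$ already forces $r=4$, $s=0$. Second, the claim that four unit complex numbers with zero sum form two antipodal pairs needs a line of justification. The paper notes $e_3=(\prod_i z_i)\overline{e_1}=0$, so the quartic with these roots is even.
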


\begin{proof}
Let $GM$ be a constituent orbit which is itself a spherical $2$-design.
Then $M^TM=I_2/2$, so the two columns of $M$ are linearly independent.
Its stabilizer therefore fixes a $2$-dimensional subspace of $W$ pointwise.
Pointwise stabilizers in a finite real reflection group are parabolic
\cite{Humphreys1990}.  Among parabolic subgroups of $W(E_6)$ of rank at most
$4$, the largest is $W(D_4)$, of order $192$.  Hence
\[
|GM|\ge \frac{51840}{192}=270.
\]

For real type with $n=6$, Theorem~\ref{thm:bound} says that the total weight
of all zero-defect orbits is at most
\[
\frac{n-1}{n+1}=\frac57.
\]
Since $Y$ is unweighted and contains an orbit of size at least $270$,
\[
\frac{270}{|Y|}\le\frac57,
\]
which gives $|Y|\ge378$.

Suppose now that $|Y|=378$.  Equality must hold throughout the preceding
argument.  Thus there is exactly one zero-defect orbit, it has size $270$,
and every correction orbit has rank-one Gram matrix.  A rank-one
representative has the form $wu^T$, and its stabilizer is the stabilizer of
the nonzero vector $w\in W$.  Such a stabilizer is again parabolic.  There are
$108$ correction points.  Any correction orbit occurring here therefore has
size at most $108$, so its stabilizer has order at least $480$.  The maximal parabolic types are
$D_5$, $A_5$, $A_1A_4$, and $A_1A_2A_2$ (with repetitions according to the
deleted node); see also the $E_6$ parabolic table in
\cite[Table~12]{KrishnasamyTaylor2018}.  Their orders show that the only
maximal parabolics of order at least $480$ have types $D_5$ and $A_5$, giving
orbit sizes
\[
27\quad\text{and}\quad72,
\]
respectively; lower-rank parabolics give larger orbits.  Hence, if $r$ and
$s$ are the numbers of correction orbits of sizes $27$ and $72$, then
\[
27r+72s=108.
\]
The only nonnegative integral solution is
\[
r=4,\qquad s=0.
\]

The four correction orbits consequently have equal weights.  At equality in
Theorem~\ref{thm:bound}, their normalized defects are an equal-weight
spherical $2$-design of four points on $S^1$.  Such a design is necessarily a
square.  Indeed, viewing the four defects as unit complex numbers $z_i$, the
condition $\sum_i z_i=0$ also gives
$e_3=(\prod_i z_i)\overline{\sum_i z_i}=0$, so the monic polynomial with
roots $z_i$ is even and the points occur in two antipodal pairs.  The isotropic
second moment then makes the two pair directions orthogonal.
\end{proof}

\section*{Statement on the use of artificial intelligence}

OpenAI's ChatGPT was used during the preparation of this manuscript to assist
with exploratory mathematical reasoning and computations, literature searches,
the generation and checking of verification code, and the revision of
exposition.  The mathematical arguments, computations, final statements, and
cited references remain the responsibility of the authors.

\begingroup
\small
\bibliographystyle{plain}
\bibliography{quadratic_defect_completions_revised_v7}
\endgroup

\end{document}